\documentclass[11pt]{article}

\usepackage{amsmath,amssymb,amsthm,mathrsfs}
\usepackage{graphicx,geometry,hyperref}
\usepackage{tcolorbox}
\usepackage{tikz}
\usepackage{subfigure}
\usepackage{float}
\usepackage{indentfirst}
\usepackage{appendix}
\newcommand{\pa}{\mathbb{P}_0}
\newcommand{\pn}{\mathbb{P}_{\neq}}
\newcommand{\Laps}{\Delta^{(\sigma)}}
\newcommand{\Lapsh}{\Delta^{(\sigma)}_H}
\newcommand{\nablas}{\nabla^{(\sigma)}}
\newcommand{\divs}{\operatorname{div}^{(\sigma)}}
\newcommand{\Lk}{\mathcal{L}_K}
\newcommand{\R}{\mathcal{R}}

\newtheoremstyle{break} 
    {3pt} 
    {3pt} 
    {\itshape} 
    {} 
    {\bfseries} 
    {.} 
    {5pt} 
    {\thmname{#1}\thmnumber{ #2}\thmnote{ (#3)}} 
\theoremstyle{break}

\newtheorem{theorem}{Theorem}[section]

\newtheorem{lemma}[theorem]{Lemma}

\newtheorem{proposition}[theorem]{Proposition}

\newtheorem{conjecture}[theorem]{Conjecture}
\newtheorem{remark}[theorem]{Remark}

\newcommand{\rref}[1]{\hyperref[#1]{(\ref*{#1})}}

\begin{document}
\title{{\bf Rigidity of Landau solution in the rotated self-similar class}}
\author{Yuxuan Shi and Liqun Zhang}
\date{}
\maketitle

\begin{abstract}
    Landau solution is a special family of solutions to the stationary Navier Stokes equations, which is important for the study of stationary problems such as asymptotic behavior and regularity. In this paper we prove that the Landau solution is rigid in rotated self-similar class when rotation parameter $\alpha$ sufficiently small or large. The arguments in the two parts rely on different approaches. For small rotation, we use the compactness argument and establish a classification lemma of the self-similar kernel of the linearization around any Landau solution, this method can also be extended to DSS and RDSS cases. For large rotation, the strong angular dissipation produces additional coercivity on the non-axisymmetric component, which forces the solution to reduce into Landau solution. 
\end{abstract}

\section{Introduction}

Consider the 3D stationary Navier Stokes (SNS) equations
\begin{equation}\label{eq:sns}
    \left\{\begin{aligned}
        &\,-\Delta u+u\cdot\nabla u+\nabla p=0\\
        &\qquad\quad\ \operatorname{div}u=0
    \end{aligned}\right.\quad\text{in } \mathbb{R}^3\backslash\{0\}
\end{equation}
satisfies 
\begin{equation}\label{eq:scale_bound}
    |u(x)|\le\frac{C}{|x|}
\end{equation}
for some $C>0$, which is a natural critical condition under the Navier-Stokes scaling
\begin{equation}\label{eq:scaling}
    u_{\lambda}(x)=\lambda u(\lambda x),\quad\ p_{\lambda}(x)=\lambda^2p(\lambda x)
\end{equation}
and by \cite{ST2000} (using scaling argument and regularity theory of SNS)
\begin{equation}\label{eq:derivative_decay}
    |\nabla^ku|\le\frac{C}{|x|^{k+1}}
\end{equation}

In fact, there is indeed a family of solutions exactly in this class, which is called Landau solution. This solution is independently computed by Slezkin \cite{S1934} in 1934, Landau \cite{L1944} in 1944 and Squire \cite{S1951} in 1951. More recently it is revisited and obtained under fewer restrictions by Tian-Xin \cite{TX1998} and Cannone-Karch \cite{CK2004}. The detailed computation can also be found in the textbook \cite[$\S 23$]{LL1987} of Landau and Lifshitz or \cite[$\S 8$]{T2018} of Tsai.

We use the standard spherical coordinates $(r,\theta,\phi)$ with
\[
    (x_1,x_2,x_3)=(r\sin\phi\cos\theta,r\sin\phi\sin\theta,r\cos\phi)
\]
and normal orthogonal basis
\[
    e_r=\frac{x}{r},\quad e_{\theta}=(-\sin\theta,\cos\theta,0),\quad e_{\phi}=e_{\theta}\times e_r
\]
to give the explicit formula of Landau solution.

In spherical coordinates, the Landau solution can be written as 
\begin{equation}\label{eq:Landau-solution}
    U^b=\frac{2}{r}\left(\frac{a^2-1}{(a-\cos\phi)^2}-1\right)e_r+\frac{-2\sin\phi}{r(a-\cos\phi)}e_{\phi},\qquad P^b=\frac{4(a\cos\phi-1)}{r^2(a-\cos\phi)^2}
\end{equation}
with parameter $a\in (1,\infty]$, they solve
\begin{equation}
    -\Delta U^b+U^b\cdot\nabla U^b+\nabla P^b=b\delta_0=\beta(a)e_3\delta_0
\end{equation}
where $e_3$ is unit vector in $x_3$-direction, $\delta_0$ is dirac function at the origin and
\[
    \beta(a)=16\pi\left(a+\frac{1}{2}a^2\log\frac{a-1}{a+1}+\frac{4a}{3(a^2-1)}\right)
\]

From \eqref{eq:Landau-solution}, we know that the Landau solution is not only an axisymmetric, no-swirl, but a self-similar (SS) solution (equivalently $-1$ homogeneous), which is invariant under scaling \eqref{eq:scaling}. Besides self-similarity, there are three other weaker types of similarity: RSS, DSS, RDSS.

We say $(u,p)$ is a rotated self-similar solution to SNS, if $(u,p)$ satisfies \eqref{eq:sns} and
\begin{equation}
    u(x)=\lambda \R(-2\alpha\log\lambda)u\left(\lambda\R(2\alpha\log\lambda)x\right),\quad\ \text{for}\ \forall\lambda>0\ \text{and some}\ \alpha\in\mathbb{R} 
\end{equation}
where $\R(\theta)=e^{\theta J}$ is rotation around $x_3$-axis with angle $\theta$, i.e.
\[
    \R(\theta)=\begin{pmatrix}
        \cos\theta & -\sin\theta & 0\\
        \sin\theta & \cos\theta & 0\\
        0 & 0 & 1
    \end{pmatrix}\qquad J=\begin{pmatrix}
        0 & -1 & 0\\
        1 & 0 & 0\\
        0 & 0 & 0
    \end{pmatrix}
\]
in brief, we call this solution a $\alpha$-RSS solution.

Moreover, we say $(u,p)$ is discretely self-similar solution ($\lambda_0$-DSS), if $\exists\lambda_0>1$, s.t.
\begin{equation}
    u(x)=\lambda_0u(\lambda_0x)
\end{equation}
and say $(u,p)$ is rotated discretely self-similar solution ($\lambda_0,\alpha_0$-RDSS) if $\exists\lambda_0>1,\ \alpha_0\in\mathbb{R}$ such that
\begin{equation}
    u(x)=\lambda_0 \R(-\alpha_0) u(\lambda_0\R(\alpha_0)x)
\end{equation}
the inclusion relation between four type of self similarity is
\[
    \text{SS}\subset\text{RSS}\subset\text{DSS}\subset\text{RDSS}
\]

\subsection{Rigidity problem for steady Navier Stokes equations and main result}

Landau solution is a singular solution to SNS \eqref{eq:sns} which is axisymmetric, self-similar and no-swirl. A natural problem is whether the Landau solution is rigid under some restrictions? In \cite{TX1998} Tian and Xin prove that Landau solution is rigid under axisymmetric $\&$ self-similar restriction, and in \cite{S2011} \v{S}ver\'ak remove the axisymmetric restriction and prove Landau solution is rigid in self-similar class. But if we further relax the restriction to only \eqref{eq:scale_bound}, the rigidity of Landau solution is still unknown and conjuctured by \v{S}ver\'ak in \cite{S2011}:

\begin{conjecture}\label{conj:Sverak}
    Each solution of \eqref{eq:sns} in $\mathbb{R}^3\backslash\{0\}$ satisfies
    \[
        |u(x)|\le\frac{C}{|x|}
    \]
    must be a Landau solution.
\end{conjecture}

For small $C$, the Conj \ref{conj:Sverak} is proved in \cite{MT2012} by Miura and Tsai. When $C$ is large, without smallness and similarity, this question is still open in 3D until now. Indeed, the conjecture of \v{S}ver\'{a}k can also be studied in two or higher dimensions: In \cite{S2011}, \v{S}ver\'{a}k prove that self-similar solution of SNS in $\mathbb{R}^n\backslash\{0\}$ can only be the Jeffrey-Hamel solution, Landau solution and trivial solution ($\equiv 0$) for $n=2,\ n=3,\ n\ge 4$, but when we relax the restriction to rotated self-similar solution, Guillod and Wittwer find another special class of 2D solutions which are RSS but not SS in \cite{GW2015}. In higher dimensional case ($n\ge 4$), Bang-Gui-Liu-Wang-Xie \cite{BGLWX2025} prove that under general condition $|u|\le C/|x|$, the only solution is $u\equiv 0$, i.e. the $n\ge 4$ version of Conj \ref{conj:Sverak} is true.

\begin{center}
\begin{tabular}{|c|c|c|c|}
\hline
 & self-similar & rotated self-similar & $|u|\le\frac{C}{|x|}$\\
\hline
2D & Jeffery-Hamel solution \cite{S2011}  & Guillod-Wittwer solution \cite{GW2015} & Open\\
\hline
3D & Landau solution \cite{S2011} & Open & Open \\
\hline
4D & Trivial solution ($\equiv 0$) \cite{S2011} & Trivial solution \cite{BGLWX2025} & Trivial solution \cite{BGLWX2025}\\
\hline
\end{tabular}
\end{center}

A sub-conjecture of Conj \ref{conj:Sverak} is that whether there exists a bifurcation from Landau solution, in this direction, Kwon and Tsai \cite{KT2021} provided both analytical and numerical evidences for no bifurcation in DSS + axisymmetric class.

Since there exists nontrivial RSS solution in 2D, and there is no nontrivial RSS solution in higher dimension (as a corollary of the result in \cite{BGLWX2025}), it's a natural question to ask is there nontrivial RSS solution in 3D, in this paper we try to partially answer this question when the rotated parameter $\alpha$ is sufficiently small or large.

\begin{theorem}\label{thm:RSS}
     Suppose that $u$ is an $\alpha$-RSS solution, then $\forall M>0$, $\exists\, \alpha_+(M)=C(M+1)$, $\alpha_-(M)>0$, if 
    \begin{equation}\label{eq:scale_bound}
        |u(x)|\le\frac{M}{|x|}
    \end{equation}
    then either $0\le\alpha<\alpha_{-}(M)$ or $\alpha>\alpha_{+}(M)$ implies $u$ is a Landau solution.
\end{theorem}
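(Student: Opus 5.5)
We split the statement into its two regimes and treat them by different mechanisms, as suggested in the abstract. In both we first reduce the problem to the unit sphere. An $\alpha$-RSS field satisfies $u(x)=\lambda\R(-2\alpha\log\lambda)u(\lambda\R(2\alpha\log\lambda)x)$. Differentiating in $\lambda$ at $\lambda=1$ gives the infinitesimal relation
\[
(1+x\cdot\nabla)u=2\alpha\left(Ju-(Jx)\cdot\nabla u\right)=-2\alpha\,\mathcal{L}_\theta u ,
\]
where $\mathcal{L}_\theta u=\partial_\theta u-Ju$ is the Lie derivative of $u$ along rotation about $x_3$. Writing $u=r^{-1}U(\theta,\phi)$ in the moving frame is not right, since the field is not homogeneous. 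Instead we write $u(r,\theta,\phi)=r^{-1}V(\theta+2\alpha\log r,\phi)$ with components taken in $(e_r,e_\theta,e_\phi)$. Then SNS becomes a system on $S^2$ for $(V,Q)$ in which every radial derivative $r\partial_r$ is replaced by $-1+2\alpha\partial_\theta$. The bound $|u|\le M/|x|$, together with \eqref{eq:derivative_decay}, gives $\|V\|_{C^k(S^2)}\le C_k(M)$ uniformly in $\alpha$; for the $k$-dependence we rescale on annuli. Landau solutions are exactly the $\theta$-independent, no-swirl solutions of this system, which is \v{S}ver\'ak's self-similar classification \cite{S2011}.

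\textbf{Large $\alpha$.} Decompose $V=\bar V+\tilde V$, where $\bar V$ is the $\theta$-average and $\tilde V$ has zero $\theta$-mean. In the momentum equation the term $2\alpha\partial_\theta$ enters through $\Delta$ as $(r\partial_r)^2$. Its contribution $(-1+2\alpha\partial_\theta)^2$ contains $4\alpha^2\partial_\theta^2$, which is coercive on $\tilde V$ with constant $\sim\alpha^2$ because $\|\partial_\theta\tilde V\|\ge\|\tilde V\|$. We then test the $\tilde V$-equation against $\tilde V$ in $L^2(S^2)$. The pressure is handled by the Leray projection on the sphere, using the modified divergence condition. The nonlinear terms are bounded by $C(M)\|\tilde V\|_{H^1}^2$ plus a contribution from $\bar V\cdot\nabla\tilde V$, which is also $O(M)\|\tilde V\|_{H^1}^2$. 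The cross terms linear in $\alpha$ are antisymmetric and drop out. This should give
\[
(c\alpha^2-C(M+1)\alpha-C(M+1)^2)\|\tilde V\|^2\le0 .
\]
Hence $\tilde V=0$ once $\alpha>C(M+1)$. Then $u$ is axisymmetric and the RSS relation forces it to be self-similar, so \cite{S2011} (or \cite{TX1998}) gives a Landau solution.

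The main difficulty in this regime is the first-order terms such as $2\alpha\partial_\theta$ acting on $e_r$ and the swirl coupling, which might not be exactly skew. We would check this by working with $\mathcal{L}_\theta$, which commutes with the Laplacian and the Leray projector, and we expect them to be skew.

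\textbf{Small $\alpha$.} We argue by contradiction and compactness. Suppose there are $\alpha_n\to0$ and non-Landau $\alpha_n$-RSS solutions $u_n$ with $|u_n|\le M/|x|$. By the uniform $C^k$ bounds, $V_n\to V_\infty$ along a subsequence, and $V_\infty$ solves the $\alpha=0$ system. So $u_\infty$ is self-similar and hence a Landau solution $U^{b}$ by \cite{S2011}; if $u_\infty\equiv 0$, it is the case $a=\infty$. Next we choose $b_n$ so that $U^{b_n}$ is the Landau solution closest to $u_n$, for instance by matching the flux/force $\int_{|x|=1}$ of the momentum tensor. Set $w_n=(u_n-U^{b_n})/\|u_n-U^{b_n}\|$ on $S^2$. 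Then $w_n$ satisfies the linearization $\mathcal{L}_b w+2\alpha_n(\cdots)+O(\|u_n-U^{b_n}\|)w=0$, and in the limit $w_\infty\neq0$ lies in the $-1$-homogeneous kernel of the linearization around $U^b$.

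The key step is a classification lemma: this self-similar kernel is spanned by the derivatives of the Landau family, namely $\partial_b U^b$ for $b\in\mathbb{R}^3$, which include the rotations of the axis. Given the lemma, the normalization (orthogonality to this kernel, built into the choice of $b_n$) contradicts $w_\infty\neq0$, and we conclude that $u_n$ is Landau.

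The main obstacle is proving this kernel lemma. We would decompose into Fourier modes $e^{ik\theta}$ around the axis of $U^b$ and reduce each mode to ODEs in $\phi$. For $|k|\ge2$ we expect an energy or maximum-principle argument in the spirit of \v{S}ver\'ak. For $k=0,\pm1$ we expect explicit solutions matching $\partial_bU^b$, with the remaining solutions excluded by regularity at the poles.
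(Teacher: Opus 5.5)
\textbf{Small $\alpha$: the kernel lemma is not proved.} Your architecture matches the paper's, and matching the force of $u_n$ is exactly the paper's choice of $b_n$. The mechanism, however, is not orthogonality. An $\alpha_n$-RSS solution with $\alpha_n\neq0$ has force parallel to $e_3$, so $U^{b_n}$ is itself $\alpha_n$-RSS. Hence $w_n$ obeys a residual-free equation that holds across the origin with no point mass; interior Stokes estimates on annuli give the $C^k$ compactness that keeps $\|W_\infty\|_{L^\infty}=1$. The limit solves $\mathcal{L}_{U^b}w_\infty=0$ in all of $\mathbb{R}^3$. Since $\mathcal{L}_{U^b}\bigl(\sum_i c_i\partial_{b_i}U^b\bigr)=c\,\delta_0$, zero force gives $c=0$.

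The genuine gap is the kernel lemma itself. You rightly call it the crux but only describe it as an expectation: Fourier modes in $\theta$, ``an energy or maximum-principle argument'' for $|k|\ge2$, regularity counting at the poles for $|k|\le1$. None of this is carried out. For large Landau solutions ($a$ near $1$) there is no small parameter and no evident energy structure, so this is a program rather than a proof step.

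The missing idea is that the maximum principle applies globally, with no mode decomposition. Write $U^b=r^{-1}(\nablas\varphi+Fn)$ with $F=-\Laps\varphi=2e^{\varphi}-2$.
\begin{itemize}
\item The identity $d(\nabla_XY+\nabla_YX)^\flat=\mathcal{L}_X dY^\flat+\mathcal{L}_Y dX^\flat$ together with $d(\nablas\varphi)^\flat=0$ shows that the spherical vorticity $\omega=*dv^\flat$ of the perturbation satisfies \v{S}ver\'ak's equation $-\Laps\omega+\divs(\omega\,\nablas\varphi)=0$. Since $\int_{\mathbb{S}^2}\omega=0$, his lemma gives $\omega\equiv0$.
\item Hence $v=\nablas\psi$, and the tangential equation integrates to $\widetilde q-f+\langle v,V\rangle=C$. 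Integrating the normal equation gives $C=0$.
\item The normal equation then becomes $-\Laps\Psi+\divs(\Psi\,\nablas\varphi)=0$ for $\Psi=f-2e^{\varphi}\psi$, which forces $\Psi\in\mathbb{R}e^{\varphi}$.
\item After shifting $\psi$ by a constant, $-\Laps\psi=2e^{\varphi}\psi$, i.e.\ $-\Delta_{\widetilde g}\psi=2\psi$ for $\widetilde g=e^{\varphi}g^{(\sigma)}$. This metric has Gauss curvature $1$, so it is isometric to the round sphere. The eigenspace is therefore three-dimensional and equals $\operatorname{span}\{\partial_{b_i}U^b\}$.
\end{itemize}

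\textbf{Large $\alpha$: the pressure step does not work as stated.} Your scheme is otherwise the paper's: angular dissipation $4\alpha^2\|\Lk V_{\neq}\|^2$, $4\alpha^2\|Kf_{\neq}\|^2$, Poincar\'e in $\theta$, then reduction to the axisymmetric self-similar case. Test the nonzero-mode system against $(V_{\neq},f_{\neq})$ in plain $L^2(\mathbb{S}^2)$ and use $\divs V_{\neq}=-(1-2\alpha K)f_{\neq}$. The pressure then contributes $\int[\nablas P_{\neq}\cdot V_{\neq}-(2+2\alpha K)P_{\neq}\,f_{\neq}]=-\int P_{\neq}f_{\neq}$. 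This does not vanish, and nothing in your estimates controls it. A Leray projection removes the pressure only if it is orthogonal for the pairing you test with.

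The pairing that works is $\int X\cdot V_{\neq}+\int h\,T_\alpha f_{\neq}$ with $T_\alpha=(2-2\alpha K)^{-1}(1-2\alpha K)$, i.e.\ test the normal equation with $T_\alpha f_{\neq}$. The two pressure contributions then cancel exactly. Since the multiplier of $T_\alpha$ has real part in $[\tfrac12,1]$, the dissipation $\tfrac12\|\nablas f_{\neq}\|^2+2\alpha^2\|Kf_{\neq}\|^2$ survives.

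Two further corrections follow from this pairing. The linear term $2\alpha Kf_{\neq}$ is no longer skew, though it is harmless. Cubic terms such as $\alpha\int Kf_{\neq}|V_{\neq}|^2$ are linear in $\alpha$. To obtain $\alpha_+(M)=C(M+1)$, bound all nonlinear terms using only $|V|,|f|\le M$ and integration by parts, with Young's inequality against $\|\nablas V_{\neq}\|^2$ and $\|\nablas f_{\neq}\|^2$. Do not use the derivative bounds $C_k(M)$.
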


In DSS of RDSS case, we also prove the following result

\begin{theorem}\label{thm:DSS}
     Suppose that $u$ is a $\lambda$-DSS solution, then $\forall M>0$, $\exists\, \lambda_+(M)>1$, if 
    \begin{equation}\label{eq:scale_bound}
        |u(x)|\le\frac{M}{|x|}
    \end{equation}
    and $1<\lambda <\lambda_+(M)$, then $u$ is Landau solution
\end{theorem}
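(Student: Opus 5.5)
The statement to prove is Theorem \ref{thm:DSS}: a $\lambda$-DSS solution with $|u|\le M/|x|$ is Landau once $1<\lambda<\lambda_+(M)$. I will argue by contradiction through compactness, following the small-rotation strategy outlined in the abstract.

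\textbf{Step 1 (compactness).} Suppose there are $\lambda_k\downarrow 1$ and $\lambda_k$-DSS solutions $u_k$ with $|u_k|\le M/|x|$, none of which is a Landau solution. The bounds \eqref{eq:derivative_decay} hold uniformly in $k$. Hence, on the annulus $1\le|x|\le 2$ (and on every compact subset of $\mathbb{R}^3\backslash\{0\}$), a subsequence converges in $C^m$ to a solution $u_\infty$ of \eqref{eq:sns}. Each $u_k$ is invariant under the group $\{\lambda_k^n\}_{n\in\mathbb{Z}}$. As $\lambda_k\to1$ these discrete groups become dense in $\mathbb{R}_+$, so $u_\infty$ is invariant under every scaling, i.e. self-similar. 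By \v{S}ver\'ak's theorem \cite{S2011}, $u_\infty=U^{b}$ for some Landau solution, possibly rotated by some $Q\in SO(3)$; we normalize $Q=I$ by rotating the $u_k$, which preserves the DSS property.

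\textbf{Step 2 (subtract the nearest Landau solution).} For each $k$, pick the Landau solution $U_k$ (parameter $a_k$ and axis direction $Q_k$) closest to $u_k$ in a weighted norm on the sphere, for instance $\|\,|x|(u_k-U_k)\|_{L^2(S^2)}$ averaged over one period annulus. Set $w_k=u_k-U_k\ne0$. It is still $\lambda_k$-DSS and satisfies
\[
-\Delta w_k+U_k\cdot\nabla w_k+w_k\cdot\nabla U_k+\nabla q_k=-w_k\cdot\nabla w_k .
\]
Because the choice is a minimizer, $w_k$ is orthogonal to the tangent directions of the Landau family: the derivative in $a$ and the infinitesimal rotations of the axis. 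Normalize $v_k=w_k/\varepsilon_k$, where $\varepsilon_k$ is the sup of $|x||w_k|$ over a period annulus. Since $\varepsilon_k\to0$, the quadratic term is $O(\varepsilon_k)$, and elliptic estimates give $v_k\to v$ locally. The limit $v$ is nonzero, because the normalized sup is attained on a compact annulus. By the same density argument it is self-similar, and it solves the linearization around $U^b$:
\[
-\Delta v+U^b\cdot\nabla v+v\cdot\nabla U^b+\nabla q=0 ,
\]
with possibly a delta source $c\,\delta_0$ at the origin. The orthogonality conditions pass to the limit.

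\textbf{Step 3 (classification lemma for the kernel).} This is the main obstacle. One must show that every $(-1)$-homogeneous solution of the linearized system on $\mathbb{R}^3\backslash\{0\}$ lies in the span of $\partial_a U^b$ and of $\frac{d}{dt}\big|_{t=0}\,Q(t)U^b Q(t)^{-1}$ for rotations $Q(t)$ about $e_1$ and $e_2$.

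First I would decompose $v$ in Fourier modes $e^{im\theta}$ around the $x_3$-axis; this is possible because $U^b$ is axisymmetric, so the linearized operator commutes with rotations about $e_3$. On $S^2$ each mode gives an ODE system in $\phi$. Next I would adapt \v{S}ver\'ak's argument. For $m=0$, reduce to the Tian--Xin ODE; its linearization has solutions regular at both poles only along $\partial_a$, plus an axisymmetric swirl component that must be shown to vanish. For $m=\pm1$, the only regular solutions should be the rotation generators. For $|m|\ge2$, an energy/maximum principle argument on the sphere should force $v=0$: use the divergence-free structure and the fact that homogeneity $-1$ makes $\operatorname{div}(U^b\otimes v+v\otimes U^b)$ controllable. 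Alternatively, use that $|x|v$ satisfies an elliptic system on $S^2$ whose coercivity improves with $m^2$. The delicate parts are the singularity of $U^b$ near the pole $\phi=0$ when $a$ is near $1$, and handling the pressure.

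\textbf{Step 4 (conclusion).} The limit $v$ is nonzero by Step 2, yet by Step 3 it is a combination of tangent vectors to the Landau family that is orthogonal to all of them. Hence $v=0$, a contradiction. Therefore some $\lambda_+(M)>1$ exists such that every $\lambda$-DSS solution with $|u|\le M/|x|$ and $1<\lambda<\lambda_+(M)$ is Landau.
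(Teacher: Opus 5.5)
Your Steps 1, 2 and 4 are a workable variant of the paper's argument. Step 3, however, which you rightly call the main obstacle, is the actual content of the theorem, and it is not proved. For $m=0,\pm1$ you only state what the regular solutions `should' be, and the coercivity you propose for $|m|\ge 2$ is unsupported.

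The classification is needed for every Landau solution permitted by $|u|\le M/|x|$, including $a$ close to $1$. There, near the pole, $|\nabla U^b|\sim (a-1)^{-3/2}$ on the scale $\phi\sim (a-1)^{1/2}$. This dominates $m^2/\sin^2\phi\sim m^2/(a-1)$ for each fixed $m$, so a mode-by-mode energy or maximum-principle argument cannot absorb the convection terms perturbatively.

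The missing idea is structural and needs no Fourier decomposition (Proposition~\ref{prop:classification}). Write $w=\frac1r(v+fn)$ and use that the tangential part of $U^b$ is a gradient, $V=\nabla^{(\sigma)}\varphi$, with $F=-\Delta^{(\sigma)}\varphi=2e^{\varphi}-2$.
\begin{itemize}
\item Apply $*d$ to the linearized tangential equation. Since $d(\nabla_Vv+\nabla_vV)^\flat=\mathcal{L}_V(dv^\flat)+\mathcal{L}_v(dV^\flat)$ and $dV^\flat=0$, the spherical vorticity $\omega$ of $v$ solves the conservative equation $-\Delta^{(\sigma)}\omega+\operatorname{div}^{(\sigma)}(V\omega)=0$.
\item Because $\omega$ has zero mean, \v{S}ver\'ak's lemma (Lemma~\ref{lem:maximal_lemma}) gives $\omega\equiv 0$. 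This also disposes of the swirl you were worried about.
\item Then $v=\nabla^{(\sigma)}\psi$, and the tangential equation integrates to $\widetilde q-f+\langle v,V\rangle=C$.
\item Substituting into the normal equation and integrating gives $C=0$. The normal equation then becomes $-\Delta^{(\sigma)}\Psi+\operatorname{div}^{(\sigma)}(\Psi\nabla^{(\sigma)}\varphi)=0$ for $\Psi=f-2e^{\varphi}\psi$, so $\Psi=ce^{\varphi}$.
\item Shifting $\psi$ by a constant gives $-\Delta_{\widetilde g}\psi=2\psi$ for $\widetilde g=e^{\varphi}g^{(\sigma)}$. This metric is isometric to the round one, so the solution space is $3$-dimensional and equals $\operatorname{span}\{\partial_{b_i}U^b\}$, uniformly in $b$.
\end{itemize}

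There is also a smaller issue in your parametrization. The $(a,Q)$ coordinates degenerate at $a=\infty$, i.e.\ at $U=0$: the rotation generators vanish and $\partial_a U\to 0$. Yet the self-similar kernel of the Stokes operator is the $3$-dimensional span of Stokeslets. Unless you exclude $u_\infty=0$ separately (e.g.\ via the Miura--Tsai small-data result), your Step 3 is false there and your orthogonality conditions are vacuous. Parametrize by $b\in\mathbb{R}^3$ instead and impose orthogonality to $\partial_{b_i}U^{b_k}$, which is smooth through $b=0$.

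With that fix, orthogonality is a legitimate alternative to the paper's device. The paper subtracts the Landau solution $U^{b_n}$ carrying the same point force $b_n\delta_0$ as $u_n$. The normalized limit then solves the linearized system in all of $\mathbb{R}^3$ with no source. Since $\sum c_i\partial_{b_i}U^b$ produces the source $c\,\delta_0$, this gives $c=0$ directly, with no minimization. Your route tolerates a source at the origin but needs the existence of a minimizer and the passage of orthogonality to the limit. Either closes the argument once the classification lemma is available.
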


\begin{theorem}\label{thm:RDSS}
    Suppose that $u$ is a $\lambda,\alpha$-RDSS solution, then $\forall M>0$, $\exists\, \lambda_+(M,\alpha)>1$, if 
    \begin{equation}\label{eq:scale_bound}
        |u(x)|\le\frac{M}{|x|}
    \end{equation}
    and $1<\lambda <\lambda_+(M,\alpha)$, then $u$ is Landau solution.
\end{theorem}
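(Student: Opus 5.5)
We argue by contradiction combined with compactness. Suppose that for some $M>0$ and $\alpha\in\mathbb{R}$ there is a sequence $\lambda_k\downarrow 1$ and $(\lambda_k,\alpha)$-RDSS solutions $u_k$ with $|u_k|\le M/|x|$ that are not Landau solutions. By \eqref{eq:derivative_decay}, the $u_k$ are uniformly bounded in $C^m$ on every compact subset of $\mathbb{R}^3\backslash\{0\}$. Hence a subsequence converges locally smoothly to a solution $u_\infty$ of \eqref{eq:sns} with $|u_\infty|\le M/|x|$.

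\textbf{Step 1: the limit is self-similar.} Each $u_k$ is invariant under the group generated by $g_k=(\lambda_k,\R(\alpha))$, i.e. under $(\lambda_k^n,\R(n\alpha))$ for every $n\in\mathbb{Z}$.

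If $\alpha/2\pi=p/q$ is rational, the power $n=q$ gives pure $\lambda_k^q$-DSS invariance. Since $\lambda_k^q\to1$, choosing $n=q\lfloor \log\mu/(q\log\lambda_k)\rfloor$ shows that $u_\infty$ is invariant under every dilation $\mu>0$.

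If $\alpha/2\pi$ is irrational, fix $n$ and let $k\to\infty$. Then $(\lambda_k^n,\R(n\alpha))\to(1,\R(n\alpha))$, so $u_\infty(x)=\R(-n\alpha)u_\infty(\R(n\alpha)x)$ for all $n$. By density of $\{n\alpha\}$ mod $2\pi$, $u_\infty$ is axisymmetric-equivariant. Combining this with the elements $(\lambda_k^n,\R(n\alpha))$ for $n\approx\log\mu/\log\lambda_k$ and passing to a convergent subsequence of the angles, $u_\infty$ is again invariant under all dilations.

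In both cases $u_\infty$ is self-similar. By \v{S}ver\'ak's theorem \cite{S2011} it is a Landau solution $U^{b_\infty}$. In the irrational case $b_\infty\parallel e_3$; in the rational case $U^{b_\infty}$ additionally has the $\R(2\pi/q)$-symmetry inherited from the $u_k$.

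\textbf{Step 2: modulation and the linearized problem.} For each $k$ choose the Landau solution $U^{b_k}$ nearest to $u_k$, in the weighted norm $\|\,|x|\,(\cdot)\|_{L^2}$ over one scaling period $\{1\le|x|\le\lambda_k\}$ (together with its rotated copies, which is compatible with the RDSS structure). Then $U^{b_k}\to U^{b_\infty}$, and $w_k=u_k-U^{b_k}$ is orthogonal to the tangent space of the Landau family compatible with the symmetry. Set $\varepsilon_k=\|w_k\|\to0$ (measured over a period) and $v_k=w_k/\varepsilon_k$. The $v_k$ solve
\[
-\Delta v_k+U^{b_k}\cdot\nabla v_k+v_k\cdot\nabla U^{b_k}+\nabla q_k=-\varepsilon_k v_k\cdot\nabla v_k .
\]
The same RDSS invariance holds for $v_k$ whenever $U^{b_k}$ is itself invariant. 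This is automatic if $b_k\parallel e_3$, and in the rational case one restricts the modulation to the admissible $b$. Interior estimates for this linear Stokes-type system, applied on annuli and propagated by the RDSS relation, give uniform $C^m_{loc}$ bounds for $v_k$ with weight $|x|^{-1}$. Hence $v_k\to v$ locally smoothly, with $\|v\|=1$.

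By Step 1's argument, $v$ is $-1$ homogeneous and lies in the self-similar kernel of the linearization around $U^{b_\infty}$ with the same symmetry. The classification lemma for this kernel (the one stated in the paper for any Landau solution) says that the kernel is spanned by $\partial_a U^b$ and the infinitesimal rotations $\partial_\omega(\R_\omega U^b)$. The rotation modes transverse to $e_3$ are not equivariant under $\R(n\alpha)$, respectively under $\R(2\pi/q)$ with $q\ge 3$; the cases $q\le2$ are handled by allowing $b$ to be any admissible vector in the modulation. So $v$ lies in the tangent space of the admissible Landau family. The orthogonality conditions pass to the limit, forcing $v=0$, which contradicts $\|v\|=1$.

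\textbf{Main obstacle.} The hardest step is the uniform, non-degenerating compactness of $v_k$ with the norm computed on the shrinking period $\{1\le|x|\le\lambda_k\}$. One must show that $\|v_k\|=1$ on one period controls, and is controlled by, local norms on a fixed annulus. This should follow by averaging the norm over $\sim 1/\log\lambda_k$ periods, using the exact invariance, so that the normalization is equivalent to one on $\{1\le|x|\le2\}$. Nondegeneracy then passes to the limit. A second delicate point is matching the modulation to the symmetry in the small-$q$ rational cases so that exactly the admissible kernel directions are removed.
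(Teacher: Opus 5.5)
Your argument is correct. Its skeleton matches the paper's: contradiction, compactness, a self-similar Landau limit obtained from the density of $\{n\alpha\}$ mod $2\pi$ and $\lambda_k^{n_k}\to\mu$, then a normalized difference converging to a self-similar element of $\operatorname{Ker}\mathcal{L}_{U^{b_\infty}}$. The genuine difference is how that kernel element is killed.

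The paper does not modulate. It compares $u_n$ with the Landau solution carrying the same momentum flux $b_n$ (the coefficient of $\delta_0$), so $u_n-U^{b_n}$ solves the perturbed equation across the origin with zero point force. It normalizes by $\sup_{1\le|x|\le\lambda_n}|x||u_n-U^{b_n}|$, which equals the global sup by the invariance, so nondegeneracy of the limit is immediate. The identity $\mathcal{L}_{U^b}\bigl(\sum_i c_i\partial_{b_i}U^b\bigr)=c\,\delta_0$ from Proposition~\ref{prop:classification} then forces $c=0$. Rational $\alpha$ is simply handed to Theorem~\ref{thm:DSS} through $\lambda^q$. This choice is canonical and automatically symmetric: $\lambda\R(-\alpha)u(\lambda\R(\alpha)x)$ has flux $\R(-\alpha)b$, so for $\alpha\notin2\pi\mathbb{Z}$ the flux is parallel to $e_3$ and $U^{b_n}$ is itself RDSS.

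Your nearest-point modulation uses only the span statement of the classification together with the equivariance $U^{\R b}=\R\,U^b(\R^{-1}\cdot)$. That equivariance shows the invariant kernel elements are exactly $\sum_i c_i\partial_{b_i}U^{b_\infty}$ with $\R(\alpha)c=c$, and your route never needs the equation at the origin. The price is:
\begin{itemize}
\item existence and convergence of the minimizer;
\item smoothness of $b\mapsto U^b$;
\item an $L^2\to C^m$ bootstrap (write $\varepsilon_kv_k\cdot\nabla v_k=w_k\cdot\nabla v_k$ with $w_k$ uniformly smooth, so the system is linear);
\item the period averaging you flag. This is in fact routine, since with weight $|x|^{-1}dx$ all per-period integrals of an RDSS field coincide.
\end{itemize}

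Three small clean-ups:
\begin{itemize}
\item In the irrational case, restrict the modulation to $b\parallel e_3$ explicitly, or argue that the local minimizer is unique and hence axial.
\item The $q\le 2$ carve-out is unnecessary. $\R(\pi)c=-c$ for $c\perp e_3$, and for rational $\alpha$ you may modulate over all $b\in\mathbb{R}^3$ anyway, because $u_k$ is $\lambda_k^q$-DSS and every $U^b$ is self-similar.
\item Describe the kernel by $\partial_{b_i}U^b$ rather than by $\partial_aU^b$ plus infinitesimal rotations, since the latter degenerate when $b_\infty=0$.
\end{itemize}
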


\begin{remark}
    RSS solution is actually a special class of DSS solution via relation $\lambda=e^{\frac{\pi}{|\alpha|}}$, hence $\lambda\to 1^+$ correspond to $\alpha\to\infty$, i.e. the large rotation part is a corollary of Thm \ref{thm:DSS}, but the energy estimate argument in section 5 can give a quantitative description of $\alpha_+(M)$
\end{remark}

\subsection{Motivation of rigidity result from asymptotic behavior and regularity}

Landau rigidity problem is closedly related to the asymptotic behavior of stationary flows. \v{S}ver\'ak and Tsai \cite{ST2000} established optimal derivative decay \eqref{eq:derivative_decay} under the critical bound $u(x)=O(|x|^{-1})$. For $C$ small, Korolev and \v{S}ver\'ak \cite{KS2011} observed that the leading order term is some Landau solution $U^b$, with remainder $O(|x|^{-\alpha})$, $1<\alpha<2$ by perturbative method. Very recently, Jia and \v{S}ver\'ak \cite{JS2026} proved the endpoint case $\alpha=2$ for the next order, and obtained the refined $O(|x|^{-2})$ asymptotics around small Landau solutions by analyze the spectral property of linearized system on $\mathbb{S}^2$. Without smallness, even the identification of the leading order term remains open, and a general Landau rigidity result will provide this through a blow down argument, as in the higher dimensional setting \cite{BGLWX2025}.

Rigidity is also relevant to removable singularity/regularity problem of SNS, the theory of removable singularity is developed through the works of Dyer-Edmunds \cite{DE1970}, Shapiro \cite{S1974}, Choe-Kim \cite{CK2000} and others. A sharp form of removable singularity is obtained by Kim and Kozono \cite{KK2006}, they prove a smooth solution in $B_R\backslash\{0\}\subset\mathbb{R}^n, n\ge 3$ satisfies $u\in L^n$ of $u=o(1/|x|)$, then the singularity can be removed. The sharpness is from Landau solution as a counterexample. However, when the equation holds on the whole $B_R$, nonzero Landau solution is excluded by their point force $b\delta_0$. Hence in \cite{T2018}, Tsai mentioned a conjecture that if $|u|\le C/|x|$ or equivalently $u\in L^{3,\infty}$ without smallness, whether $u$ is regular? Similarly a general rigidity result will show the regularity through a blow up argument, see \cite{BGLWX2025} for higher-dimensional case.

\subsection{Strategy of main results}

When $u$ is self-similar solution, by decomposing $u$ into tangential and normal part
\[
    u=\frac{1}{r}(V(\sigma)+f(\sigma)n),\quad\ V\in\mathfrak{X}(\mathbb{S}^2),\ f\in C^{\infty}(\mathbb{S}^2)
\]
(where $\mathfrak{X}(\mathbb{S}^2)$ is set of smooth tangent vector fields on $\mathbb{S}^2$), \v{S}ver\'ak \cite{S2011} observe that $\omega$ (vorticity of $V$ on $\mathbb{S}^2$) satisfies a good structure
\begin{equation}\label{eq:vorticity_SS}
    -\Laps\omega+\divs(V\omega)=0
\end{equation}
which implies $\omega=0$ via the maximal principle type lemma in Appendix B, and the problem is transfer into the classification problem of conformal geometry equation on $\mathbb{S}^2$ using the equation of $f$.

But when it comes to the $\alpha$-RSS case, the extra rotation terms $-4\alpha^2\Lk^2V-2\alpha(1+f)\Lk V$ (where $\Lk$ is Lie derivative w.r.t $K=\partial_{\theta}$) in tangential equation destroy the structure of vorticity (the rotation terms in normal equation also make trouble), which makes the argument in \cite{S2011} failed. Fortunately, when the rotation parameter is small or large, there are two distinct approaches to force the solution to be Landau. 

Our proving strategy is divided into two parts:

(i) When parameter $\alpha$ is sufficiently small, we can view it as a bifurcation type problem near self-similar solution. Using compactness argument and taking tangential limit of the Landau manifold, we transfer the small $\alpha$ rigidity problem into the classification problem of self-similar kernel of the linearized operator
\[
    \mathcal{L}_{U^b}w=-\Delta w+U^b\cdot\nabla w+w\cdot\nabla U^b+\nabla q=0,\quad\ \operatorname{div} w=0\quad\ \text{in}\ \mathbb{R}^3\backslash\{0\}
\]
for any Landau solution $U^b$, and prove that
\[
    \operatorname{Ker}\mathcal{L}_{U^b}\,\cap\,\{w|w\ \text{is self-similar}\}=\operatorname{span}\{\partial_{b_i}U^b\}_{i=1,2,3}
\]
this approach can also be used in the DSS and RDSS cases.

(ii) When parameter $\alpha$ is sufficiently large, using the DSS result we can get a qualitative result, a more quantitative way is that the rotation term in the profile equation provide an extra angular dissipation
\begin{equation}\label{eq:angular_dissipation}
    4\alpha^2\|\Lk V\|_{L^2}^2,\quad\ 4\alpha^2\|Kf\|_{L^2}^2
\end{equation}
inspired by how Pineau-Vicol \cite{PV2026} ruling out the backward RSS blow up of nonstationary system with large rotation, we prove that the coercivity caused by \eqref{eq:angular_dissipation} will kill the non-axisymmetric part $V_{\neq}$ and $f_{\neq}$, which reduces the problem back to axisymmetric $\&$ self-similar case.

\begin{remark}
    Similar symmetry classes arise naturally and earlier in the non-existence of backward type blow up of non-stationary Navier Stokes. The backward self-similar type blow up is raised by Leray \cite{L1934}, and it was ruling out by Ne\v{c}as-R\r{u}\v{z}i\v{c}ka-\v{S}ver\'ak \cite{NRS1996} and Tsai \cite{T1998} under global and local energy condition. Moreover, Chae-Wolf \cite{CW2017} rule out backward-DSS blow up when $\lambda-1$ small, and recently Pineau-Vicol \cite{PV2026} rule out backwrad-RSS with small or large rotation. The main different of the stationary problem compare to non-stationary problem is that there exists indeed non-trivial (Landau) solution, so we need a classification theorem to Landau solution rather than a Liouville type theorem to trivial solution.
\end{remark}

\section{Formulation of profile equation of RSS-solution on $\mathbb{S}^2$}

In \cite{S2011}, by decompose $u$ into tangential part and normal part
\[
    u=\frac{1}{r}(V(\sigma)+f(\sigma)n),\quad\ V\in\mathfrak{X}(\mathbb{S}^2),\ f\in C^{\infty}(\mathbb{S}^2)
\]
and rewrite the SNS into tangential, normal part and incompressible condition as follows:
\begin{equation}\label{eq:SS-profile}
\left\{\begin{array}{c}
    -\Lapsh V+\nablas_VV+\nablas(P-2f)=0\\
    -\Laps f+\nablas_Vf-|V|^2-|f|^2-2P=0\\
    \divs V+f=0
\end{array}\right.\quad\text{on}\ \mathbb{S}^2
\end{equation}
$\Lapsh$, $\Laps$, $\nablas$, $\divs$ is differential opeartors on $\mathbb{S}^2$, see \cite{S2011} or Appendix A for its derivation.

Next we derive the profile equation of $\alpha$-RSS solution on $\mathbb{S}^2$, by introducing the similarity variable $t=\log r$, and decompose $u$ into tangential and normal part as usual, we have
\[
    u(r,\sigma)=\frac{1}{r}\left(V(t,\sigma)+f(t,\sigma)n\right),\qquad p(r,\sigma)=\frac{1}{r^2}P(t,\sigma)
\]
the addition term compare to \eqref{eq:SS-profile} is caused by $\partial_r$ apply on $t$-variable, e.g. the divergence is
\[
    0=\operatorname{div}u=\partial_r\left(\frac{f(t,\sigma)}{r}\right)+\frac{2f(t,\sigma)}{r^2}+\frac{1}{r^2}\divs V\ \Longrightarrow\ \divs V+f+f_t=0
\]
the convection term is
\begin{equation}\nonumber
\begin{aligned}
    (u\cdot\nabla)u&=\frac{1}{r}(V+fn)\cdot\nabla\left(\frac{1}{r}(V+fn)\right)\\
    &=\frac{1}{r^2}D_{V+fn}(V+fn)-\frac{1}{r^3}f(V+fn)\qquad D\text{ is connection of }\mathbb{R}^3\\
    &=\frac{1}{r^3}\left(\nablas_VV-|V|^2n-|f|^2n+\nablas_Vfn+fV_t+ff_tn\right)
\end{aligned}
\end{equation}
the viscous term is
\begin{equation}\nonumber
\begin{aligned}
    \Delta u & =\partial_r^2u+\frac{2}{r}\partial_ru+\frac{1}{r^2}\Laps_S u\\
    &=\frac{1}{r^3}\left(V_{tt}-V_t+\Lapsh V+2\nablas f\right)+\frac{1}{r^3}\left(f_{tt}+f_t+\Laps f\right)n
\end{aligned}
\end{equation}
and the pressure term is
\begin{equation}\nonumber
    \nabla p=\left(n\partial_r+\frac{1}{r}\nablas\right)\left(\frac{1}{r^2}P(t,\sigma)\right)=\frac{1}{r^3}\left(\nablas P-2Pn+P_tn\right)
\end{equation}

To conclude, we have
\begin{equation}
    \left\{\begin{aligned}
        &\quad\ \,-V_{tt}+(1+f)V_t-\Lapsh V+\nablas_VV+\nablas(P-2f)=0\\
        &-f_{tt}+(f-1)f_t-\Laps f+\nablas_Vf-|V|^2-|f|^2+P_t-2P=0\\
        &\qquad\qquad\qquad\qquad\ \; \divs V+f+f_t=0
    \end{aligned}\right.
\end{equation}

When $u$ be a rotated self-similar solution, we know that
\[
    u(x)=\lambda\mathcal{R}(-2\alpha\log\lambda)u\left(\lambda\mathcal{R}(2\alpha\log\lambda)x\right)\ \Longrightarrow\ u(x)=\frac{1}{r}\mathcal{R}(2\alpha\log r)u\left(\mathcal{R}(-2\alpha\log r)\sigma\right)
\]
denote
\[
    \R(2\alpha\log r)u(\R(-2\alpha\log r)\sigma)=V(t,\sigma)+f(t,\sigma)n
\]
where $V$ satisfies
\[
    V(t,\sigma)=\mathcal{R}(2\alpha t)V\left(\mathcal{R}(-2at)\sigma\right)
\]

By definition of Lie derivative
\[
    V_t=\frac{d}{dt}\mathcal{R}(2\alpha t)V\left(\mathcal{R}(-2\alpha t)\sigma\right)=-2\alpha\mathcal{L}_{K}V
\]
where $K=\partial_\theta$ is generated by rotation $\R$, i.e. 
\[
    K(\sigma)=J\sigma=e_3\times\sigma=\partial_\theta\sigma\ \Longrightarrow\ K=\partial_\theta
\]
then the profile equation of $\alpha$-RSS solution is
\begin{equation}\label{eq:RSS_profile}
    \left\{\begin{array}{c}
        -\Lapsh V-4\alpha^2\mathcal{L}_K^2V-2\alpha(1+f)\mathcal{L}_KV+\nablas_VV+\nablas(P-2f)=0\\
        -\Laps f-4\alpha^2K^2f-2\alpha(f-1)Kf+\nablas_Vf-|V|^2-|f|^2-2\alpha KP-2P=0\\
        \divs V+f-2\alpha Kf=0
    \end{array}\right.
\end{equation}

\section{Classification lemma and compactness argument when $\alpha$ small}

In this section we will first prove a classification lemma of the kernel of the linearized operator (in self-similar class), which is useful for ruling out not only $\alpha$-RSS when $\alpha$ small, but also $\lambda$-DSS when $\lambda$ small. When $\alpha$ is small, we can use compactness argument and take tangential limit to reduce the problem to the linear classification lemma.

\subsection{Classification lemma of the kernel of linearized operator}

We first studied the kernel of linearized operator around $U^b$
\[(\mathcal{L}_{U^b})w=-\Delta w+U^b\cdot\nabla w+w\cdot\nabla U^b+\nabla q,\quad\ \operatorname{div}w=0\]
since $U^b$ is a solution family varies with parameter $b$, hence if we take $U^{b+\varepsilon c}$ into the \eqref{eq:sns}, and check the derivative value at $\varepsilon=0$, we have:
\begin{equation}
\begin{aligned}
    \left.\frac{d}{d\varepsilon}\right|_{\varepsilon=0}\left(-\Delta U^{b+\varepsilon c}+U^{b+\varepsilon c}\cdot\nabla U^{b+\varepsilon c}+\nabla P^{b+\varepsilon c}\right)=&-\Delta w+U^b\cdot\nabla w+w\cdot\nabla U^b+\nabla Q\\
    =&\left.\frac{d}{d\varepsilon}\right|_{\varepsilon=0}(b+\varepsilon c)\delta_0=c\delta_0
\end{aligned}
\end{equation}
where
\[
    w=\sum\limits_{i=1}^{3}c_i\partial_{b_i}U^b
\]
so $\{\partial_{b_i}U^b\}_{i=1,2,3}\subset\operatorname{Ker}(\mathcal{L}_{U^b})$. The vector $\partial_{b_i}U^b$ corresponds to the Landau solution $U^b$ varies in $b$, and whether there exists a bifurcation from Landau solution is closedly related to whether there exists other element in the kernel of linearized operator. See also the study on $\operatorname{Ker}\mathcal{L}_{U^b}$ in \cite{JS2026} for $b$ small but without similarity assumption, and \cite{KT2021} for the DSS + axisymmetric case with evidence combining analytical proof and numerical experiment.

In this section, we prove that the $\operatorname{Ker}(\mathcal{L}_{U^b})\,\cap\, \{w|w\ \text{is self-similar}\}=\operatorname{span}\{\partial_{b_i}U^b\}_{i=1,2,3}$.
\begin{proposition}[Classification of the self-similar kernel of $\mathcal{L}_{U^b}$]\label{prop:classification} \
    
    If $(w,q)$ is a self-similar solution satisfies
    \begin{equation}\label{eq:linear_perturb}
    \left\{\begin{array}{c}
        -\Delta w+U^b\cdot\nabla w+w\cdot\nabla U^b+\nabla q=0\\
        \operatorname{div} w=0
    \end{array}\right.\qquad\text{in}\ \mathbb{R}^3\backslash\{0\}
    \end{equation}
    then every solution of \eqref{eq:linear_perturb} can be represent by linear combination $w=c_1\partial_{b_1}U^b+c_2\partial_{b_2}U^b+c_3\partial_{b_3}U^b$ and $q=c_1\partial_{b_1}P^b+c_2\partial_{b_2}P^b+c_3\partial_{b_3}P^b$, then
    \[
        -\Delta w+U^b\cdot\nabla w+w\cdot\nabla U^b+\nabla q=c\delta_0,\qquad c=(c_1,c_2,c_3)
    \]
    i.e. $\operatorname{Ker}(\mathcal{L}_{U^b})\,\cap\, \{w|w\ \text{is self-similar}\}=\operatorname{span}\{\partial_{b_i}U^b\}_{i=1,2,3}$ 
    
\end{proposition}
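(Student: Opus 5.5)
The plan is to follow \v{S}ver\'ak's reduction for self-similar solutions, now applied to the linearized profile equations on $\mathbb{S}^2$. Write $w=\frac{1}{r}(W(\sigma)+g(\sigma)n)$ and $q=\frac{1}{r^2}Q(\sigma)$, and write the Landau solution as $U^b=\frac1r(V_b+f_bn)$, $P^b=\frac{1}{r^2}P_b$. Linearizing \eqref{eq:SS-profile} at $(V_b,f_b,P_b)$ gives
\begin{equation}\nonumber
\left\{\begin{array}{c}
-\Lapsh W+\nablas_{V_b}W+\nablas_W V_b+\nablas(Q-2g)=0\\
-\Laps g+\nablas_{V_b}g+\nablas_W f_b-2V_b\cdot W-2f_bg-2Q=0\\
\divs W+g=0
\end{array}\right.\quad\text{on }\mathbb{S}^2 .
\end{equation}
For $a\in(1,\infty]$ the profiles $V_b,f_b$ are smooth on $\mathbb{S}^2$, and $V_b=-\frac{2\sin\phi}{a-\cos\phi}e_\phi$ is a gradient field on $\mathbb{S}^2$, so its surface vorticity vanishes. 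By rotation invariance I may take $b$ parallel to $e_3$, and then use the rotation generators to account for $\partial_{b_1}U^b,\partial_{b_2}U^b$.

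\emph{Step 1 (vorticity).} Let $\omega$ be the surface vorticity of $W$. I would apply the curl on $\mathbb{S}^2$ to the linearized tangential equation, exactly as in the derivation of \eqref{eq:vorticity_SS}. The only new terms are bilinear in $W$ and the vorticity $\omega_b$ of $V_b$, and they vanish because $\omega_b=0$. This should leave
\[-\Laps\omega+\divs(V_b\omega)=0 \quad\text{on }\mathbb{S}^2,\]
and the maximum principle type lemma of Appendix B (with the smooth drift $V_b$) then gives $\omega\equiv0$. Hence $W=\nablas\varphi$ for some $\varphi\in C^\infty(\mathbb{S}^2)$, and the divergence equation gives $g=-\Laps\varphi$.

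\emph{Step 2 (reduction to a scalar linear equation).} In \v{S}ver\'ak's nonlinear argument, with $V=\nablas\psi$ the tangential equation becomes the gradient of a scalar quantity, and this leads to a Liouville-type equation
\[\Laps\psi+2=2e^{\psi}\]
(up to normalizations) for a potential tied to $f$, whose solutions are exactly the Landau profiles. Linearizing that chain of identities, the tangential equation for $W=\nablas\varphi$ becomes $\nablas(\text{linear scalar expression in }\varphi,Q)=0$. That expression therefore equals a constant $\kappa$, and substituting the normal equation gives a linear equation
\[\Laps\varphi+2e^{\psi_b}\varphi=\kappa'\]
(again up to normalization), where $\psi_b$ is the Landau potential. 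This is the linearized Liouville operator around the conformal factor $e^{\psi_b}$ of the round metric pulled back by the Möbius map associated with $a$.

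\emph{Step 3 (kernel of the linearized Liouville operator).} Pulling back by that conformal map turns $\Laps+2e^{\psi_b}$ into $\Laps+2$ on the round sphere, up to the conformal weight. Its kernel is therefore the three first spherical harmonics, and the constant-right-hand-side problem is solved by an explicit particular solution. Matching dimensions with the family $U^b$, $b\in\mathbb{R}^3$, whose derivatives $\partial_{b_i}U^b$ are already known to lie in the kernel, should identify the solution set with $\operatorname{span}\{\partial_{b_i}U^b\}$. The final identity $\mathcal L_{U^b}w=c\delta_0$ then follows from the differentiation in $\varepsilon$ computed just before the proposition.

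I expect the main obstacle to be Step 2 together with the bookkeeping of constants. One must check carefully that the linearization of \v{S}ver\'ak's integrating step (forming $\nablas$ of a scalar, then integrating) produces no spurious solutions. The integration constant $\kappa$ and any additive constant in $\varphi$ must either be fixed by the compatibility conditions, such as integrating the equations over $\mathbb{S}^2$, or else correspond exactly to the variation of the parameter $a$, which is the $\partial_{b_3}$ direction. Step 1 also requires verifying that no coupling term $\omega_b$ or $\nablas f_b\times\cdot$ survives in the curl computation.
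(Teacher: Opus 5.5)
Your overall route is the paper's: the vorticity equation plus the Appendix B lemma give $W=\nablas\varphi$, then you integrate the tangential equation, reduce to the linearized Liouville problem, and count dimensions via the conformal metric $e^{\psi_b}g^{(\sigma)}$. (A small point in Step 1: the lemma needs $\int_{\mathbb{S}^2}\omega\,d\sigma=0$, which holds because $\omega\,\Omega_0=dW^\flat$ is exact.) The gap is Step 2. That is where the real content lies, and you leave it as ``linearize \v{S}ver\'ak's chain.'' Substituting the integrated tangential equation $Q-g+\langle W,V_b\rangle=\kappa$ into the normal equation does \emph{not} give a Liouville-type equation for $\varphi$. It gives
\[
-\Laps g+\divs(gV_b+f_bW)-2g=2\kappa ,
\]
which is second order in $g$, i.e.\ fourth order in $\varphi$. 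Two further ideas are needed. First, integrate over $\mathbb{S}^2$: since $\int g\,d\sigma=-\int\Laps\varphi\,d\sigma=0$, this forces $\kappa=0$. Second, write $-2g=\divs(2W)$ and use $V_b=\nablas\psi_b$, $f_b+2=2e^{\psi_b}$. The equation then becomes $L_b(g-2e^{\psi_b}\varphi)=0$, where $L_b\Psi=-\Laps\Psi+\divs(\Psi\nablas\psi_b)=-\divs\bigl(e^{\psi_b}\nablas(e^{-\psi_b}\Psi)\bigr)$. Testing against $e^{-\psi_b}\Psi$ shows $\ker L_b=\mathbb{R}e^{\psi_b}$. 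Hence $g=2e^{\psi_b}\varphi+ce^{\psi_b}$. Replacing $\varphi$ by $\varphi+c/2$ is harmless, since $w$ determines only $\nablas\varphi$, and yields exactly $-\Laps\varphi=2e^{\psi_b}\varphi$, i.e.\ $-\Delta_{\tilde g}\varphi=2\varphi$ with $\tilde g=e^{\psi_b}g^{(\sigma)}$.

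This also corrects two things in your sketch. First, the leftover constant enters as $ce^{\psi_b}$, not as a constant $\kappa'$. With a genuinely constant right-hand side, $\Laps\varphi+2e^{\psi_b}\varphi=\kappa'$ has no constant particular solution unless $b=0$, so your ``explicit particular solution'' claim fails for the equation as you wrote it. Second, neither constant can ``correspond to the variation of $a$.'' Both are eliminated, so the normalized $\varphi$ depends linearly and injectively on $w$, and this is exactly what gives $\dim\le 3$. The $\partial_{b_3}$ direction is one of the three first eigenfunctions of $-\Delta_{\tilde g}$, not an integration constant. If a constant survived as an independent parameter, your count would give $4$, and the identification with $\operatorname{span}\{\partial_{b_i}U^b\}$ would fail. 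Once these steps are filled in, your Step 3 and the final dimension matching go through as in the paper. Linear independence of the $\partial_{b_i}U^b$ follows from $\mathcal{L}_{U^b}\bigl(\sum_i c_i\partial_{b_i}U^b\bigr)=c\,\delta_0$.
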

\begin{proof}
    Denote
    \[
        U^b(r,\sigma)=\frac{1}{r}\left(V(\sigma)+F(\sigma)n\right),\qquad w(r,\sigma)=\frac{1}{r}\left(v(\sigma)+f(\sigma)n\right)
    \]
    where $U^b$ is Landau solution satisfies
    \begin{equation}\label{eq:Landau_sol}
        d{V}^\flat=0,\qquad V=\nablas\varphi,\qquad F=-\Laps\varphi=2e^{\varphi}-2
    \end{equation}
    
    The (linear) perturbation equation on $\mathbb{S}^2$ is
    \begin{equation}\label{eq:perturb_equation}
    \left\{\begin{array}{c}
        -\Lapsh v+\nablas_{v}V+\nablas_Vv+\nablas(\widetilde{q}-2f)=0\\
        -\Laps f+\nablas_{V}f+\nablas_{v}F-2\langle v,V\rangle_{(\sigma)}-2fF-2\widetilde{q}=0\\
        \divs v+f=0
    \end{array}\right.
    \end{equation}

    Take curl (or $*d$) on the tangential equation of \eqref{eq:perturb_equation}, then by Prop \ref{prop:perturb_vorticity_equation} and \eqref{eq:Landau_sol} we get
    \begin{equation}\label{eq:vorticity_linear}
        -\Lapsh\omega+\divs(V\omega)=0
    \end{equation}
    where $*dv^\flat=\omega$, and by maximal type Lem \ref{lem:maximal_lemma} (from \cite{S2011}) we have $\omega\equiv 0$.

    Therefore, $\exists\,\psi$ such that $v=\nablas\psi$, then
    \[
        f=-\Laps\psi,\qquad F=-\Laps\varphi
    \]
    the tangential (first) equation of \eqref{eq:perturb_equation} becomes
    \begin{equation}\nonumber
        \begin{aligned}
            0&=-\Lapsh\nablas\psi+\nablas\left\langle\nablas\psi,\nablas\varphi\right\rangle_{(\sigma)}+\nablas(\widetilde{q}-2f)\\
            &=\nablas\left(\widetilde{q}-f+\left\langle\nablas\psi,\nablas\varphi\right\rangle_{(\sigma)}\right)
        \end{aligned}
    \end{equation}
    then
    \begin{equation}\label{eq:equation_with_const}
        \widetilde{q}-f+\left\langle v,V\right\rangle_{(\sigma)}=C
    \end{equation}

    Substitute \eqref{eq:equation_with_const} into the normal equation of \eqref{eq:perturb_equation}, we get
    \begin{equation}\label{eq:normal}
        -\Laps f+\divs\left(fV+Fv\right)-2f=2C
    \end{equation}

    Integrate \eqref{eq:normal} on $\mathbb{S}^2$ we know that $C=0$, and
    \begin{equation}
    \begin{aligned}
        0&=-\Laps f+\divs(fV)+\divs\left((F+2)v\right)\\
        &=-\Laps f+\divs(f\,\nablas\varphi)+\divs\left(2e^{\varphi}\nablas\psi\right)\qquad \text{by  \eqref{eq:Landau_sol}}\\
        &=-\Laps\left(f-2e^{\varphi}\psi\right)+\divs((f-2e^{\varphi}\psi)\,\nablas\varphi)
    \end{aligned}
    \end{equation}
    since the solution of $-\Laps\Psi+\divs(\Psi\nablas\varphi)=0$ must be of the form $Ce^\varphi$ (take $\Psi=C(x)e^{\varphi}$ into the equation, $C(x)$ must be constant from the strong maximal principle), we know
    \[
        f-2e^{\varphi}\psi=Ce^{\varphi}
    \]

    Substitute $\psi$ by $\psi-\frac{C}{2}$, we have
    \[
        -\Laps\psi=f=2e^{\varphi}\psi
    \]
    consider conformal transform $\widetilde{g}=e^{\varphi}g^{(\sigma)}$, then $\Delta^{\widetilde{g}}=e^{-\varphi}\Laps$, consequently
    \[
        -\Delta^{\widetilde{g}}\psi=2\psi
    \]
    
    Since $\widetilde{g}$ isometric to $g^{(\sigma)}$ (from the proof in Thm 1 of \cite{S2011}), and the Laplacian of isometry metric is equivalent under isometric transformation, hence the first non-zero eigenvalue of $-\Delta_{\widetilde{g}}$ is exactly $\lambda_1=2$ and $\operatorname{dim}\operatorname{Ker}(2+\Delta_{\widetilde{g}})=3$, i.e. $\operatorname{dim}\operatorname{Ker}(\mathcal{L}_{U^b})\,\cap\,\{w|w\ \text{is self-similar}\}=3$, since
    \[
        \{\partial_{b_i}U^b\}_{i=1,2,3}\subset\operatorname{Ker}(\mathcal{L}_{U^b})
    \]
    hence we have
    \[
        \operatorname{Ker}(\mathcal{L}_{U^b})\,\cap\,\{w|w\ \text{is self-similar}\}=\operatorname{span}\{\partial_{b_i}U^b\}_{i=1,2,3}
    \]
\end{proof}

As an application of the classification lemma of self-similar kernel of $\mathcal{L}_{U^b}$, we can prove the Thm \ref{thm:RSS} by compactness argument.

\begin{proof}[Proof for Theorem \ref{thm:RSS} when $\alpha$ small]\

\noindent\textbf{Step 1: Compactness method}

Let us proof by contradiction, suppose that there $\exists M_0>0$, $\alpha_n\to 0$ and a sequence $\{(u_n,p_n)\}_{n\ge 1}$ where each $(u_n,p_n)$ is an $\alpha_n$-RSS solution of SNS but \textbf{NOT} Landau solution, i.e.
\begin{equation}\label{eq:u_n_alpha_RSS}
    \left\{\begin{array}{c}
    -\Delta u_n+(u_n\cdot\nabla)u_n+\nabla p_n=\beta_ne_3\delta_0,\quad\ \operatorname{div}u_n=0\\
    u_n(x)=\lambda R(-2\alpha_n\log\lambda)u(\lambda R(2\alpha_n\log\lambda)x)
    \end{array}\right.
\end{equation}
and satisfies the uniform bound $\sup\limits_{x\in\mathbb{R}^3\backslash\{0\}}|x||u_n(x)|\le M_0$.

By standard scaling argument and regularity theory of SNS (ref \cite{ST2000}), we know that $|\nabla^ku_n|\le C_k(M_0)/|x|^{k+1}$ and $|\nabla^{k}p_n|\le C_k(M_0)/|x|^{k+2}$ for $\forall k\ge 0$ (zero order control follows from the RSS condition of $p_n$), denote that
\begin{equation}\label{eq:U_n_alpha_RSS}
    u_n(r,\phi,\theta)=\frac{1}{r} R(2\alpha_n\log r)U_n(\phi,\theta-2\alpha_n\log r)
\end{equation}
\begin{equation}\label{eq:P_n_alpha_RSS}
    p_n(r,\phi,\theta)=\frac{1}{r^2}P_n(\phi,\theta-2\alpha_n\log r)
\end{equation}
where $U_n(\phi,\theta)=u_n(1,\phi,\theta)$ and $P_n(\phi,\theta)=p_n(1,\phi,\theta)$, we know that $\{\nabla^k U_n\}_{n\ge 1}$ is uniformly bounded on $\mathbb{S}^2$ for $\forall k\ge 0$, hence $\exists$ subsequence of $U_n$ (still denote by $U_n$) and $U_{\infty}\in C^{\infty}(\mathbb{S}^2)$ such that $U_n$ converges uniformly to $U_{\infty}$ in the sense of $C^k(\mathbb{S}^2)$ for $\forall k\ge 0$. By similar argument, we have $ P_n\rightrightarrows P_{\infty}$ in $C^k(\mathbb{S}^2)$ for $\forall k\ge 0$.

Extend $U_{\infty}$, $P_{\infty}$ into $\mathbb{R}^3\backslash\{0\}$ in self-similar way
\[
    u_{\infty}(x)=\frac{1}{r}U_{\infty}(\sigma),\quad\ p_{\infty}(x)=\frac{1}{r^2}P_{\infty}(\sigma),\quad\ x=r\sigma
\]
by \eqref{eq:U_n_alpha_RSS} and \eqref{eq:P_n_alpha_RSS}, $u_n\rightrightarrows u_{\infty}$, $p_n\rightrightarrows p_{\infty}$ in $C^k(K)$ for $\forall k\ge 0$ and $K\Subset\mathbb{R}^3\backslash\{0\}$, taking limit on \eqref{eq:u_n_alpha_RSS}, we have
\[
    -\Delta u_{\infty}+(u_{\infty}\cdot\nabla)u_{\infty}+\nabla p_{\infty}=\beta_{\infty} e_3\delta_0
\]
where $\beta_n\to\beta_{\infty}$. Since $u_{\infty}$ is self-similar solution, $u_{\infty}$ must be Landau solution $U^{\beta_{\infty} e_3}$ with parameter $\beta_{\infty} e_3$. (To avoid ambiguity in notation, we denote the Landau solution with momentum strength $b$ to be $U^b(x)=\frac{1}{r}\widetilde{U}^b(\sigma)$).

\noindent\textbf{Step 2: Tangential limit}

Since $u_n$ is not Landau solution, $U_n\not\equiv \widetilde{U}^{\beta_n e_3}$, then we denote that
\[
    \varepsilon_n=\sup\limits_{\mathbb{S}^2}|U_n-\widetilde{U}^{\beta_ne_3}|>0,\quad\ \varepsilon_n\to 0
\]
(by $U_n\rightrightarrows \widetilde{U}^{\beta e_3}$, $\widetilde{U}^{\beta_n e_3}\rightrightarrows \widetilde{U}^{\beta e_3}$) and let 
\[
    W_n=\frac{1}{\varepsilon_n}(U_n-\widetilde{U}^{\beta_ne_3}),\quad\ \Pi_n=\frac{1}{\varepsilon_n}(P_n-\widetilde{P}^{\beta_ne_3}),\quad\ \|W_n\|_{L^{\infty}}= 1
\]

To taking tangential limit of $W_n$, we need to regain compactness, for this purpose, we extend again $W_n$ and $\Pi_n$ into $\mathbb{R}^3\backslash\{0\}$ by
\[
    w_n=\frac{1}{r}R(2\alpha_n\log r)W_n(\phi,\theta-2\alpha_n\log r)
\]
\[
    \pi_n=\frac{1}{r^2}\Pi_n(\phi,\theta-2\alpha_n\log r)
\]
then $w_n$ satisfies
\begin{equation}\label{eq:w_n_equ}
    -\Delta w_n+\nabla\pi_n=-\operatorname{div}(w_n\otimes U^{\beta_n e_3}+U^{\beta_ne_3}\otimes w_n+\varepsilon_n w_n\otimes w_n)
\end{equation}
using interior estimates of Stokes system on annulus $A_{1/4,4}$ and the fact that $U^b$ is bounded by some constant $C$ (only rely on $b$) in $A_{1/4,4}$, we have
\[
    \|\nabla w_n\|_{L^p(A_{1/2,2})}+\inf\limits_{s\in\mathbb{R}}\|\pi_n-s\|_{L^p(A_{1/2,2})}\le C_{p,b}(\varepsilon_n\|w_n\|_{L^\infty(A_{1/4,4})}^2+\|w_n\|_{L^{\infty}(A_{1/4,4})})\le CC_{p,b}
\]
from standard bootstrap argument $\|\nabla^k w_n\|_{L^\infty}\le C_k$, which implies $\|\nabla^kW_n\|_{L^\infty}\le C_k$, and consequently
\[
    \exists\,W_{\infty}\in C^{\infty}(\mathbb{S}^2),\quad\ W_n\rightrightarrows W_{\infty}\quad\ \text{in}\ C^k\ \text{sense},\ \forall k\ge 0
\]
similarly
\[
    \Pi_n\rightrightarrows\Pi_{\infty}\quad\ \text{in}\ C^k\ \text{sense}
\]

\noindent\textbf{Step 3: Rule out nontrivial tangential limit}

Extend $W_{\infty}$, $\Pi_{\infty}$ back to $\mathbb{R}^3\backslash\{0\}$ in self-similar way and get $w_{\infty}$ and $\pi_{\infty}$, since $(u_n,p_n)$ and $(U^{\beta_n e_3},P^{\beta_n e_3})$ share the same momentum flux input $\beta_n e_3\delta_0$, the equation \eqref{eq:w_n_equ} is actually valid in the whole $\mathbb{R}^3$ (especially at the origin), taking limit $n\to\infty$
\[
    -\Delta w_{\infty}+U^{\beta_{\infty} e_3}\cdot\nabla w_{\infty}+w_{\infty}\cdot\nabla U^{\beta_{\infty} e_3}+\nabla\pi_{\infty}=0 \quad\ \text{in}\ \mathbb{R}^3
\]
where $w_{\infty}$ is self-similar. Then using Prop 4.2, we can conclude that $w_{\infty}=0$, contradict to the fact that $\|W_{\infty}\|_{L^\infty}=1$
    
\end{proof}

\section{DSS cases and RDSS cases}

As a corollary of Prop \ref{prop:classification}, we can also proof the DSS and RDSS case in similar way.

\begin{proof}[Proof of Theorem \ref{thm:DSS}]
    Proof by contradiction as before, suppose that there $\exists M_0>0$, $\lambda_n\to 1^+$ and a sequence $\{(u_n,p_n)\}$ where each $(u_n,p_n)$ is $\lambda_n$-DSS solution of SNS but \textbf{NOT} Landau solution, with $|u_n(x)|\le\frac{M_0}{|x|}$.

    By similar compactness argument as in the last section, we have
    \[
        u_n(x)\rightrightarrows u_{\infty}(x)\quad\ \text{in}\ C^{\infty}_{\text{loc}}(\mathbb{R}^3\backslash\{0\})
    \]

    For $\forall\lambda\in\mathbb{R}_+$, since $\lambda_n\to 1^+$, we can find integers $k_n\in\mathbb{Z}$ such that $\lambda_n^{k_n}\to\lambda$, since for fix $x$, we have
    \[
        u_n(x)=\lambda_nu_n(\lambda_nx)=\lambda_n^{k_n}u_n(\lambda_n^{k_n}x)
    \]
    take $n\to\infty$ we know that $u_{\infty}(x)=\lambda u_{\infty}(\lambda x)$ for $\forall\lambda\in\mathbb{R}_+$, hence $u_{\infty}$ is a self-similar solution to SNS, must be a Landau solution $U^{b_\infty}$.

    Since $u_n$ is not Landau solution, we denote that
    \[
        \varepsilon_n=\sup\limits_{1\le|x|\le\lambda_n}|x||u_n(x)-U^{b_n}(x)|\to 0,\quad\ w_n=\frac{1}{\varepsilon_n}(u_n-U^{b_n})
    \]
    taking tangential limit as before then we can obtain $w_n\rightrightarrows w_{\infty}$ in $C^k(K)$ for $\forall k\ge 0$ and $K\Subset\mathbb{R}^3\backslash\{0\}$, where $w_{\infty}\in \operatorname{Ker}\mathcal{L}_{U^{b_{\infty}}}\cap\{w| w\ \text{is self-similar}\}$ with zero external force at origin, hence $w\equiv 0$, contradict to $\sup\limits_{x\in\mathbb{R}^3\backslash\{0\}}|x||w_{\infty}(x)|=1$!
\end{proof}
    
Similar argument in RDSS case
\begin{proof}[Proof of Theorem \ref{thm:RDSS}]
    Suppose that there $\exists M_0>0$, $\lambda_n\to 1^+$ and a sequence $\{(u_n,p_n)\}$ where each $(u_n,p_n)$ is $\lambda_n,\alpha$-RDSS solution of SNS but \textbf{NOT} Landau solution, with $|u_n(x)|\le\frac{M_0}{|x|}$.    

    If $\alpha\in 2\pi\mathbb{Q}$, by definition we know that all $\lambda_n,\alpha$-RDSS is actually DSS solution, so we only consider $\alpha\notin2\pi\mathbb{Q}$, by similar argument as DSS case, we obtain a limit $u\rightrightarrows u_{\infty}$ satisfies
    \[
        u_{\infty}(x)=\R(-\alpha)u_{\infty}(\R(\alpha)x)\ \Longrightarrow\ u_{\infty}(x)=\R(-k\alpha)u_{\infty}(\R(k\alpha)x)
    \]
    since $\{\,\{k\alpha/2\pi\}\,\}_{k\ge1}$ is dense in $[0,1)$ ($\{x\}$ is decimal part of $x$), hence $u_{\infty}(x)=\R (-\theta)u_{\infty}(\R(\theta)x)$ is axisymmetric.

    For $\forall\lambda\in\mathbb{R}_+$, $\exists k_n\in\mathbb{Z}$ such that $\lambda_n^{k_n}\to\lambda$, since 
    \[
        u_n(x)=\lambda_n\R(-\alpha)u_n(\lambda_n\R(\alpha)x)=\lambda_n^{k_n}\R(-k_n\alpha)u_n(\lambda_n^{k_n}\R(k_n\alpha)x)
    \]
    choose subsequence of $k_n$ (still denote by $k_n$) such that $k_n\alpha\to\theta\ \text{mod}\ 2\pi$, then for fix $x$, take $n\to\infty$, we have
    \[
        u_{\infty}(x)=\lambda\R(-\theta)u_{\infty}(\lambda\R(\theta)x)=\lambda u_{\infty}(\lambda x)
    \]
    where the last equality comes from axisymmetric property of $u_\infty$, therefore $u_\infty$ is self-similar, and remaining proof is just  a repetition of tangential limit argument in RSS and DSS cases. 
\end{proof}

\section{Energy estimate argument when $\alpha$ large}

By Thm \eqref{thm:DSS}, we know that $\exists\,\alpha_+(M)>0$ such that the only $\alpha$-RSS solution is Landau solution when $\alpha>\alpha_+(M)$. In this section we provide an alternative proof and establish the quantitative result $\alpha_+(M)\sim 1+O(M)$  

\subsection{Axisymmetric projection and commutation relations}

Firstly, we decompose $f,P\in C^{\infty}\left(\mathbb{S}^2\right)$ and $V\in \mathfrak{X}\left({\mathbb{S}^2}\right)$ into axisymmetric part and non-axisymmetric part, i.e.
\[
    f=\pa f+\pn f=\frac{1}{2\pi}\int_{0}^{2\pi}f(\theta,\phi)d\theta+\left(f-\frac{1}{2\pi}\int_{0}^{2\pi}f(\theta,\phi)d\theta\right)
\]
it corresponds to the zero mode and non-zero mode of Fourier expansion w.r.t. $\theta$.

For vector fields, we can also define this kind of decomposition, if $V=V_{\theta}\partial_{\theta}+V_{\varphi}\partial_\varphi$, then
\[
    V=\pa V+\pn V=((\pa V_{\theta})\partial_\theta+(\pa V_{\phi})\partial_\phi)+((\pn V_{\theta})\partial_\theta+(\pn V_{\phi})\partial_\phi)
\]

To apply the $\theta$-mode decomposition on the tangential and normal equation, we need to verify the commutation property between $\pa$, $\pn$ and differtial operator on $\mathbb{S}^2$ as follows:

\begin{lemma}
    {\ 
    
    (i) $\Lk\pa =0=\pa\Lk$, $K\pa=0=\pa K$, and $\Lk\pn=\pn\Lk(=\Lk)$, $K\pn=\pn K(=K)$ 
    
    (ii) $\pa$, $\pn$ commute with $\Lapsh$ and $\Laps$

    (iii) $\pa$, $\pn$ commute with $\nablas$ and $\divs$
    }
\end{lemma}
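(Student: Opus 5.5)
The common mechanism is that $K=\partial_\theta$ is a Killing field of the round metric $g^{(\sigma)}$, and that $\pa$ is the average over its flow. Write $\Phi_s$ for the flow of $K$, i.e. the rotation $\R(s)$ restricted to $\mathbb{S}^2$. Then $\Phi_s$ is an isometry, and for functions and vector fields we have
\[
    \pa f=\frac{1}{2\pi}\int_0^{2\pi}\Phi_s^*f\,ds,\qquad \pa V=\frac{1}{2\pi}\int_0^{2\pi}\Phi_s^*V\,ds .
\]
For vector fields, $\Phi_s^*V=d\Phi_{-s}(V\circ\Phi_s)$. In the coordinate frame $\{\partial_\theta,\partial_\phi\}$ this pullback simply translates the components $V_\theta,V_\phi$ in $\theta$, because $\Phi_s$ is a translation in $\theta$ and $d\Phi_s$ maps $\partial_\theta,\partial_\phi$ to themselves. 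So this definition agrees with the componentwise definition in the paper. The plan is to use this integral representation throughout rather than computing in coordinates.

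For (i), use $\Lk V=\frac{d}{ds}\big|_{s=0}\Phi_s^*V$ and $Kf=\frac{d}{ds}\big|_{s=0}\Phi_s^*f$, together with $\Phi_s^*\Phi_\tau^*=\Phi_{s+\tau}^*$. Then $\Lk\pa V=\frac{1}{2\pi}\int_0^{2\pi}\frac{d}{d\tau}\Phi_\tau^*V\,d\tau=0$ by $2\pi$-periodicity. Similarly, $\pa\Lk V=\frac{1}{2\pi}\int_0^{2\pi}\Phi_s^*\frac{d}{d\tau}\big|_{\tau=0}\Phi_\tau^*V\,ds$ is again the integral of a derivative in $s$ over a period, hence zero. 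The same argument gives the statements for $K$ on functions. Since $\pn=I-\pa$, it follows that $\Lk\pn=\Lk-\Lk\pa=\Lk$ and $\pn\Lk=\Lk-\pa\Lk=\Lk$.

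For (ii) and (iii), the key fact is that isometries commute with every natural operator built from the metric, i.e. $\Phi_s^*$ commutes with $\nablas$, $\divs$, $\Laps$, the Hodge Laplacian $\Lapsh$, and also with $*$, $d$ and $\flat$. It is then enough to commute each operator $D$ (a differential operator with smooth coefficients) with the $s$-integral. This is legitimate because $\Phi_s^*V$ is smooth jointly in $(s,\sigma)$, so we may differentiate under the integral sign. We obtain $D\pa=\frac{1}{2\pi}\int_0^{2\pi} D\Phi_s^*\,ds=\frac{1}{2\pi}\int_0^{2\pi}\Phi_s^*D\,ds=\pa D$, and $\pn=I-\pa$ then follows by linearity.

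Concretely, one can cross-check this in coordinates. The metric $d\phi^2+\sin^2\phi\,d\theta^2$ has coefficients independent of $\theta$, so every operator above has $\theta$-independent coefficients in the frame $\{\partial_\theta,\partial_\phi\}$ and therefore commutes with the $\theta$-average. I will present this coordinate argument as the primary proof and mention the isometry viewpoint as the underlying reason.

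The only delicate point is the poles $\phi=0,\pi$, where the coordinate frame degenerates. There I would argue via the coordinate-free integral formula. Alternatively, note that both sides of each identity are smooth on $\mathbb{S}^2$ and agree away from the poles, so they agree everywhere by continuity. I also need $\pa V$ to be a smooth vector field near the poles; this is immediate from the integral formula.
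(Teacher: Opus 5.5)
Your proposal is correct and is essentially the paper's argument: (i) reduces to the fundamental theorem of calculus over one period in $\theta$, and (ii)--(iii) rest on $\nablas$, $\divs$, $\Laps$, $\Lapsh$ (through $d$, $*$, $\flat$) having $\theta$-independent coefficients in the frame $\{\partial_\theta,\partial_\phi\}$, which the paper checks explicitly for $d$, $*$, $\nablas$, $\divs$. Your averaging formula $\pa=\frac{1}{2\pi}\int_0^{2\pi}\Phi_s^*\,ds$ is a coordinate-free restatement of the same idea, and it has the small advantage of settling the poles and the smoothness of $\pa V$, which the paper leaves implicit.
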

\begin{proof}
    (i) Suppose that $V=V^{\theta}\partial_\theta+V^{\phi}\partial_{\phi}$, then 
    \begin{equation}\label{eq:L_KV_formula}
        \Lk V=[K,V]=\partial_{\theta}V^{\theta}\partial_{\theta}+\partial_\theta V^{\phi}\partial_\phi
    \end{equation}
    since $\pa f$ is axisymmetric, we know that $K\pa f=0$, and by $\int_{0}^{2\pi}\partial_\theta f d\theta=0$, we have $\pa Kf=0$, similarly by \eqref{eq:L_KV_formula} we can prove that $\Lk\pa=0=\pa\Lk$. Consequently by $\mathbb{I}=\pa+\pn$, we get the last two formulas of (i).

    (ii) First we verify that $d$ commute with $\pa$
\begin{equation}\nonumber
\begin{aligned}
    d(\pa V^\flat)&=(\partial_\phi\pa \left(\sin^2\phi V^\theta\right))d\phi\wedge d\theta+(\partial_\theta\pa V^{\phi})d\theta\wedge d\phi\\
    &=\left(\frac{1}{2\pi}\int_{0}^{2\pi}\partial_\phi \left(\sin^2\phi V^\theta(\theta,\phi)\right)d\theta\right)d\phi\wedge d\theta =\left(\frac{1}{2\pi}\int_{0}^{2\pi}\partial_\phi \left(\sin^2\phi V^\theta\right)-\partial_\theta V^\phi)d\theta\right) d\phi\wedge d\theta\\
    &=\pa(dV^\flat)
\end{aligned}
\end{equation}

Since
\[
    *d\theta=-\frac{1}{\sin\phi}d\phi,\qquad *d\phi=\sin\phi d\theta,\qquad *d\theta\wedge d\phi=-\frac{1}{\sin\phi}
\]
(coefficients independent on $\theta$), we have $*\pa =\pa *$, then by $d^*=-*d\,*$ and $\Lapsh,\Laps=d^*d+dd^*$, we know that both $\Lapsh$ and $\Laps$ commute with $\pa$.

(iii) It follows from
\[
    \nablas(\pa f)=\frac{1}{\sin^2\phi}(\partial_\theta\pa f)\partial_\theta+(\partial_\phi\pa f)\partial_\phi=\pa\left(\nablas f\right)
\]
\[
    \divs (\pa V)=\partial_\theta(\pa V^{\theta})+\frac{1}{\sin\phi}\partial_\phi(\sin\phi\pa V^{\phi})=\pa\left(\divs V\right)
\]

\end{proof}

The following lemma is used for energy estimate in the next part after applying projection

\begin{lemma}\label{lem:integration_formula}\

     (i) \[\int_{\mathbb{S}^2}\pn V\cdot\pn Wd\sigma=\int_{\mathbb{S}^2}V\cdot\pn W d\sigma,\quad\ \int_{\mathbb{S}^2}\pn f\pn gd\sigma=\int_{\mathbb{S}^2}f\pn gd\sigma\]

     (ii) $\nablas_{\pa V}\pa V$ and $\nablas_{\pa V}\pa f$ is axisymmetric.
\end{lemma}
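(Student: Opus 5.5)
For (i) I would argue entirely in the coordinate frame $\{\partial_\theta,\partial_\phi\}$ and Fourier-expand in $\theta$. The round metric on $\mathbb{S}^2$ is $g=\sin^2\phi\,d\theta^2+d\phi^2$, whose coefficients do not depend on $\theta$, and the area element $d\sigma=\sin\phi\,d\theta\,d\phi$ is likewise $\theta$-independent. Hence
\[
    \int_{\mathbb{S}^2}V\cdot W\,d\sigma=\int_0^\pi\!\!\int_0^{2\pi}\left(\sin^2\phi\,V^\theta W^\theta+V^\phi W^\phi\right)\sin\phi\,d\theta\,d\phi .
\]
For fixed $\phi$, the map $\pa$ acting on the component functions is the $L^2(d\theta)$-orthogonal projection onto the zero Fourier mode, and $\pn=\mathbb{I}-\pa$ is its complementary projection. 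Both are self-adjoint idempotents, so $\int_0^{2\pi}(\pa a)(\pn b)\,d\theta=0$. Since the weights $\sin^2\phi$ and $\sin\phi$ are constant in $\theta$, I can move them inside the projection. Writing $V=\pa V+\pn V$ then gives
\[
    \int V\cdot\pn W=\int \pa V\cdot\pn W+\int\pn V\cdot\pn W,
\]
and the first term vanishes. The scalar identity follows the same way without the metric weight.

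For (ii) I would write out the covariant derivative using Christoffel symbols. For the round sphere these are
\[
    \Gamma^\phi_{\theta\theta}=-\sin\phi\cos\phi,\qquad \Gamma^\theta_{\theta\phi}=\Gamma^\theta_{\phi\theta}=\cot\phi,
\]
and all others vanish. In particular they depend only on $\phi$. Take $A=\pa V$, whose components $A^\theta,A^\phi$ are functions of $\phi$ alone. Then
\[
    (\nablas_AA)^k=A^j\partial_jA^k+\Gamma^k_{ij}A^iA^j .
\]
Here $\partial_\theta A^k=0$, so only $A^\phi\partial_\phi A^k$ survives. Every term is a product of $\theta$-independent functions, so the result is axisymmetric. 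Similarly, $\nablas_{\pa V}\pa f=(\pa V)^\phi\,\partial_\phi(\pa f)$ is independent of $\theta$.

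A cleaner, coordinate-free alternative is this: rotations $\R(s)$ are isometries of $\mathbb{S}^2$, so they commute with the Levi-Civita connection. Axisymmetric means $\R(s)$-invariant, and a covariant derivative of invariant objects is therefore invariant. I would mention this as the conceptual reason and use the coordinate computation as the check.

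There is no real obstacle here. The only point needing care is that $\pa$ is defined componentwise in the non-orthonormal frame $\{\partial_\theta,\partial_\phi\}$. The pairing therefore involves the weight $\sin^2\phi$, and I must note that this weight commutes with $\theta$-averaging, which is what makes the orthogonality argument valid. The coordinate singularity at the poles has measure zero and does not affect the integrals, because all quantities are smooth vector fields.
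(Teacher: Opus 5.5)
Your proposal is correct and follows the paper's proof. In (i), the cross terms $\pa V\cdot\pn W$ and $\pa f\,\pn g$ have zero $\theta$-mean, and you rightly note that the $\theta$-independence of the weight $\sin^2\phi$ and of $d\sigma$ is what makes this work. In (ii), the Christoffel symbols $\cot\phi$ and $-\sin\phi\cos\phi$ depend only on $\phi$, and your isometry-invariance remark is a valid conceptual supplement that the argument does not need.
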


\begin{proof}
    (i) Follows from the facts that $\pa V\cdot\pn W$ and $\pa f\pn g$ has zero $\theta$-mean.

    (ii) Follows from $\nablas_{\partial_{\phi}}\partial_{\phi}=0$, $\nablas_{\partial_{\theta}}\partial_{\phi}=\nablas_{\partial_{\phi}}\partial_{\theta}=\cot\phi\partial_{\theta}$ and $\nablas_{\partial_{\theta}}\partial_{\theta}=-\sin\phi\cos\phi\partial_\phi$ are all axisymmetric, and $\pa V$, $\pa f$ is also axisymetric.
\end{proof}

\subsection{Energy estimate of profile equations}

Applying the non-axisymmetric projection on \eqref{eq:RSS_profile}, we have
\begin{equation}
    \left\{\begin{aligned}
        &\quad\quad\, -\Lapsh V_{\neq}-4\alpha^2\mathcal{L}_K^2V_{\neq}-2\alpha[(1+f)\mathcal{L}_KV]_{\neq}+\left(\nablas_VV\right)_{\neq}+\nablas(P_{\neq}-2f_{\neq})=0\\
        &-\Laps f_{\neq}-4\alpha^2K^2f_{\neq}-2\alpha[(f-1)Kf]_{\neq}+\left(\nablas_Vf-|V|^2-|f|^2\right)_{\neq}-2\alpha KP_{\neq}-2P_{\neq}=0\\
        &\qquad\qquad\qquad\qquad\qquad\qquad \divs V_{\neq}+f_{\neq}-2\alpha Kf_{\neq}=0
    \end{aligned}\right.
\end{equation}
where $V_{\neq},\ V_0,\ ...$ is abbreviation of $\pn V,\ \pa V,\ ...\ .$

The energy estimate of the tangential equation is
\begin{equation}\label{eq:tangential_estimate}
    \begin{aligned}
        &\;\|\nablas V_{\neq}\|_{L^2}^2+\|V_{\neq}\|_{L^2}^2+4\alpha^2\|\mathcal{L}_KV_{\neq}\|_{L^2}^2\\
        =\;&\int_{\mathbb{S}^2}2\alpha(1+f)V_{\neq}\mathcal{L}_K V_{\neq}
        -\left(\nablas_VV\right)_{\neq}V_{\neq}+\left(P_{\neq}-2f_{\neq}\right)\divs\left(V_{\neq}\right)d\sigma\\
        =\;&-\alpha\int_{\mathbb{S}^2}K(1+f)|V_{\neq}|^2d\sigma-\int_{\mathbb{S}^2}\left(\nablas_VV\right) V_{\neq}d\sigma+\int_{\mathbb{S}^2}(P_{\neq}-2f_{\neq})(2\alpha Kf_{\neq}-f_{\neq})d\sigma\\
        =\;&-\alpha\int_{\mathbb{S}^2}Kf_{\neq}|V_{\neq}|^2d\sigma-\int_{\mathbb{S}^2}\left(\nablas_{V}V\right)V_{\neq}d\sigma+2\|f_{\neq}\|_{L^2}^2+\int_{\mathbb{S}^2}P_{\neq}(2\alpha Kf_{\neq}-f_{\neq})d\sigma
    \end{aligned}
\end{equation}
using the fact that $K=\partial_{\theta}$, Lem \ref{lem:integration_formula} and \eqref{eq:Weitzenbock} in Appendix A. To eliminate the pressure term, we test the normal equation by $T_{\alpha}f=(2-2\alpha K)^{-1}(1-2\alpha K)f$.

\begin{lemma}
    {\rm Suppose $\alpha>0$, $K=\partial_\theta$, then $(2-2\alpha K):C^{\infty}(\mathbb{S}^2)\to C^{\infty}(\mathbb{S}^2)$ is an invertible operator and the operator $T_{\alpha}=(2-2\alpha K)^{-1}(1-2\alpha K)$ satisfies
    \[
        \frac{1}{2}\|f\|_{L^2}^2\le\langle T_\alpha f,f\rangle\le\|f\|_{L^2}^2,\quad\|T_{\alpha}f\|_{L^2}\le\|f\|_{L^2}
    \]
    }
\end{lemma}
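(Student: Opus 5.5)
Since $K=\partial_\theta$ acts diagonally on the Fourier expansion in $\theta$, the plan is to decompose $f=\sum_{m\in\mathbb{Z}}\hat f_m(\phi)e^{im\theta}$ and reduce everything to scalar multipliers on each mode. On the $m$-th mode, $K$ acts as multiplication by $im$, so $2-2\alpha K$ becomes multiplication by $2-2i\alpha m$. Because $|2-2i\alpha m|\ge 2$, this multiplier never vanishes. Hence $(2-2\alpha K)^{-1}$ is the Fourier multiplier $(2-2i\alpha m)^{-1}$, with bound $1/2$ uniformly in $m$. It maps $C^\infty(\mathbb{S}^2)$ to itself because it commutes with $\partial_\phi$ and $\partial_\theta$ and is bounded on every Sobolev space $H^k(\mathbb{S}^2)$. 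This boundedness can be seen by noting that it commutes with $\Laps$ (it commutes with $K$, and $\Laps$ preserves each $\theta$-mode). Alternatively, one can use the identity $(2-2\alpha K)^{-1}=\int_0^\infty e^{-2s}e^{2\alpha sK}ds$, where $e^{2\alpha sK}$ is a rotation, i.e. an isometry on every $H^k$. I would use this integral formula since it makes smoothness preservation immediate.

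Next, $T_\alpha$ is the multiplier
\[
\tau_m=\frac{1-2i\alpha m}{2-2i\alpha m}.
\]
Writing $x=2\alpha m$, we have $\tau_m=\frac{1-ix}{2-ix}$. Then
\[
|\tau_m|^2=\frac{1+x^2}{4+x^2}\le 1,
\]
which gives $\|T_\alpha f\|_{L^2}\le\|f\|_{L^2}$ by Plancherel in $\theta$ combined with integration in $\phi$ against $\sin\phi\,d\phi$.

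For the quadratic form, $\langle T_\alpha f,f\rangle$ is understood for real $f$, equivalently as its real part. It equals $2\pi\sum_m \operatorname{Re}\tau_m\int|\hat f_m|^2\sin\phi\,d\phi$. Computing,
\[
\operatorname{Re}\tau_m=\operatorname{Re}\frac{(1-ix)(2+ix)}{4+x^2}=\frac{2+x^2}{4+x^2}\in\left[\tfrac12,1\right),
\]
which yields $\frac12\|f\|^2\le\langle T_\alpha f,f\rangle\le\|f\|^2$. The imaginary parts cancel between the modes $m$ and $-m$, since for real $f$ we have $\hat f_{-m}=\overline{\hat f_m}$ and $\tau_{-m}=\overline{\tau_m}$.

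The only mild obstacle is justifying the functional setting: the convergence of the Fourier series in $\theta$ near the poles, and preservation of smoothness at the poles. I expect the semigroup/rotation integral formula to handle this cleanly, because rotations about the $x_3$-axis preserve smoothness on $\mathbb{S}^2$. Then $T_\alpha=\tfrac12\bigl(1+(2-2\alpha K)^{-1}\cdot(-2\alpha K)\cdot\ldots\bigr)$ need not be expanded further, since the multiplier computation suffices on $L^2$.
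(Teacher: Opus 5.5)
Your proposal is correct and follows the paper's proof: both diagonalize $K=\partial_\theta$ by Fourier expansion in $\theta$ and read the bounds off the multiplier $\frac{1-2i\alpha m}{2-2i\alpha m}$, whose real part $\frac{2+4\alpha^2m^2}{4+4\alpha^2m^2}\in[\frac12,1)$ gives the quadratic-form inequalities. You also supply details the paper leaves implicit: the modulus bound $|\tau_m|\le 1$ for $\|T_\alpha f\|_{L^2}\le\|f\|_{L^2}$, and smoothness of $(2-2\alpha K)^{-1}f$ via $\int_0^\infty e^{-2s}e^{2\alpha sK}f\,ds$. The garbled final formula should be dropped or replaced by the identity $T_\alpha=1-(2-2\alpha K)^{-1}$.
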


\begin{proof}
    Take Fourier expansion in $\theta$-variable, for $\forall f\in C^{\infty}(\mathbb{S}^2)$, we have
    \[
        f(\theta,\phi)=\sum\limits_{m\in\mathbb{Z}}f_m(\phi)e^{im\theta}
    \]
    let
    \[
        g=\sum\limits_{m\in\mathbb{Z}}g_m(\phi)e^{im\theta},\quad g_{m}(\phi)=(2-2i\alpha m)^{-1}f_m(\phi)
    \]
    obviously $(2-2\alpha K)g=f$, $g$ is uniquely determined by $f$, so $(2-2\alpha K)$ is invertible.

    The Fourier multiplier correspond to $T_{\alpha}=(2-2\alpha K)^{-1}(1-2\alpha K)$ is
    \[
        z=\frac{1-2i\alpha m}{2-2i\alpha m},\qquad \frac{1}{2}\le\Re z=\frac{2+4\alpha^2m^2}{4+4\alpha^2m^2}\le 1
    \]
    which implies the inequality in the lemma.
\end{proof}

Since the multiplier $T_\alpha$ commute with $K$, $\nablas$, then the energy estimate of the normal equation is
\begin{equation}\label{eq:normal_estimate}
    \begin{aligned}
        &\;\frac{1}{2}\|\nablas f_{\neq}\|_{L^2}^2+2\alpha^2\|Kf_{\neq}\|_{L^2}^2\\ \le&\;\langle \nablas f_{\neq},T_{\alpha}\nablas f_{\neq}\rangle+4\alpha^2\langle Kf_{\neq},T_{\alpha}Kf_{\neq}\rangle\\
        =\;&\int_{\mathbb{S}^2}2\alpha(f-1)Kf_{\neq}\,T_{\alpha}f_{\neq}-\left(\nablas_Vf-|V|^2-|f|^2\right)T_{\alpha}f_{\neq}+P_{\neq}(1-2\alpha K)f_{\neq}d\sigma
    \end{aligned}
\end{equation}
then combining the two part, we have
\begin{equation}
    \begin{aligned}
        &\;\|\nablas V_{\neq}\|_{L^2}^2+\|V_{\neq}\|_{L^2}^2+4\alpha^2\|\mathcal{L}_KV_{\neq}\|_{L^2}^2+\frac{1}{2}\|\nablas f_{\neq}\|_{L^2}^2+2\alpha^2\|Kf_{\neq}\|_{L^2}^2\\
        \le &\;-\alpha\int_{\mathbb{S}^2}Kf_{\neq}|V_{\neq}|^2d\sigma-\int_{\mathbb{S}^2}\left(\nablas_{V}V\right)_{\neq}V_{\neq}d\sigma+2\|f_{\neq}\|_{L^2}^2+\int_{\mathbb{S}^2}2\alpha(f-1)Kf_{\neq}\,T_{\alpha}f_{\neq}d\sigma-\ \\
        &\qquad\qquad\qquad\qquad\qquad\qquad\qquad\qquad\qquad\qquad-\int_{\mathbb{S}^2}\left(\nablas_Vf-|V|^2-|f|^2\right)_{\neq}T_{\alpha}f_{\neq}d\sigma\\
        =&\; I_1+I_2+I_3+I_4+I_5
    \end{aligned}
\end{equation}
then we estimate $I_1,\ I_2,\ I_3,\ I_4,\ I_5$ term by term:

First of all, the key tool we use is a \textbf{Poincare type inequality} and the scale bound, i.e.
\[
    \|V_{\neq}\|_{L^2}\lesssim\|\mathcal{L}_KV_{\neq}\|_{L^2},\qquad\|f_{\neq}\|_{L^2}\lesssim\|Kf_{\neq}\|_{L^2}
\]
(since the zero-mode of $f_{\neq}$ and $V_{\neq}$ is 0) and from \eqref{eq:scale_bound} we have
\[
    |V|^2+|f|^2\le M^2,\qquad |V_0|,|f_0|\lesssim M,\ |V_{\neq}|,|f_{\neq}|\lesssim M
\]

\noindent\underline{\textbf{Estimate of $I_1$}:}

\begin{equation}
    |I_1|\le \alpha M\|Kf_{\neq}\|_{L^2}\|V_{\neq}\|_{L^2}\le\frac{1}{2}\alpha M\|Kf_{\neq}\|_{L^2}^2+\frac{1}{2}\alpha M\|\Lk V_{\neq}\|_{L^2}^2
\end{equation}

\noindent\underline{\textbf{Estimate of $I_2$}:}

\begin{equation}
    \begin{aligned}
        |I_2|\le &\;\left|\int_{\mathbb{S}^2}\left\langle\nablas_{V_0}V_{\neq},V_{\neq}\right\rangle d\sigma\right|+\left|\int_{\mathbb{S}^2}\left\langle\nablas_{V_{\neq}}V_{0},V_{\neq}\right\rangle d\sigma\right|+\left|\int_{\mathbb{S}^2}\left\langle\nablas_{V_{\neq}}V_{\neq},V_{\neq}\right\rangle d\sigma\right|\\
        \le &\; I_{2,1}+I_{2,2}+I_{2,3}
    \end{aligned}
\end{equation}
using the fact that $\nablas_{V_0}V_0$ is axisymmetric.

Since $V_0\langle V_{\neq},V_{\neq}\rangle=2\langle\nablas_{V_0}V_{\neq},V_{\neq}\rangle$
\[
    I_{2,1}=\frac{1}{2}\left|\int_{\mathbb{S}^2}V_0\langle V_{\neq},V_{\neq}\rangle d\sigma\right|=\frac{1}{2}\left|\int_{\mathbb{S}^2}|V_{\neq}|^2\divs V_0d\sigma\right|\le\frac{M}{2}\|V_{\neq}\|_{L^2}^2\lesssim M\|\mathcal{L}_KV_{\neq}\|_{L^2}^2
\]
using $\divs V_0=-f_0$.

By similar argument we can estimate the other terms:
\begin{equation}\nonumber
\begin{aligned}
    I_{2,2}\le\;&\left|\int_{\mathbb{S}^2}\left\langle V_0,\nablas_{V_{\neq}}V_{\neq}\right\rangle d\sigma\right|+\left|\int_{\mathbb{S}^2}\langle V_0,V_{\neq}\rangle\divs V_{\neq}\right|\\
    \le\;&M\|V_{\neq}\|_{L^2}\|\nablas V_{\neq}\|_{L^2}+M\|V_{\neq}\|_{L^2}\left(\|f_{\neq}\|_{L^2}+2\alpha\|Kf_{\neq}\|_{L^2}\right)\\
    \le\;&\frac{1}{2}\|\nablas V_{\neq}\|_{L^2}^2+C\left(2M^2+\frac{M}{2}+M\alpha\right)\|\mathcal{L}_KV_{\neq}\|_{L^2}^2+\left(\frac{CM}{2}+M\alpha\right)\|Kf_{\neq}\|_{L^2}^2
\end{aligned}
\end{equation}

\begin{equation}\nonumber
    \begin{aligned}
        I_{2,3}=\;&\frac{1}{2}\left|\int_{\mathbb{S}^2}|V_{\neq}|^2\divs V_{\neq}\right|
        \le\frac{M}{2}\|V_{\neq}\|_{L^2}\left(\|f_{\neq}\|_{L^2}+2\alpha\|Kf_{\neq}\|_{L^2}\right)\\
        \le\;&C\left(\frac{M}{4}+\frac{M\alpha}{2}\right)\|\mathcal{L}_KV_{\neq}\|_{L^2}^2+\left(\frac{CM}{4}+\frac{M\alpha}{2}\right)\|Kf_{\neq}\|_{L^2}^2
    \end{aligned}
\end{equation}

Then the estimate of $I_1$ is
\begin{equation}
    |I_2|\le\frac{1}{2}\|\nablas V_{\neq}\|_{L^2}^2+C(M\alpha+M^2+M)\|\Lk V_{\neq}\|_{L^2}^2+C(M\alpha+M)\|Kf_{\neq}\|_{L^2}^2
\end{equation}

\noindent\underline{\textbf{Estimate of $I_3$}:}

\begin{equation}
    |I_3|=2\|f_{\neq}\|_{L^2}^2\lesssim\|Kf_{\neq}\|_{L^2}^2
\end{equation}

\noindent\underline{\textbf{Estimate of $I_4$}:}

\begin{equation}
    |I_4|\le 2\alpha(M+1)\|Kf_{\neq}\|_{L^2}\|T_\alpha f_{\neq}\|_{L^2}\lesssim \alpha(M+1)\|Kf_{\neq}\|_{L^2}^2
\end{equation}

\noindent\underline{\textbf{Estimate of $I_5$}:}

\begin{equation}
\begin{aligned}
    |I_5|=\;&\left|\int_{\mathbb{S}^2}\left(\nablas_Vf-|V|^2-|f|^2\right)T_{\alpha}f_{\neq}d\sigma\right|\\
    \le\;&\left|\int_{\mathbb{S}^2}\left(\nablas_Vf\right)T_{\alpha}f_{\neq}d\sigma\right|+\left|\int_{\mathbb{S}^2}|V|^2 T_{\alpha}f_{\neq}d\sigma\right|+\left|\int_{\mathbb{S}^2}|f|^2T_{\alpha}f_{\neq}d\sigma\right|\\
    \le\;& I_{5,1}+I_{5,2}+I_{5,3}
\end{aligned}
\end{equation}

Similar as how we handle the convection term $\nablas_VV$, we have
\begin{equation}\nonumber
    \begin{aligned}
        I_{5,1}&=\left|\int_{\mathbb{S}^2}\left(\nablas_{V_0}f_{\neq}+\nablas_{V_{\neq}}f_0+\nablas_{V_{\neq}}f_{\neq}\right)T_{\alpha}f_{\neq}d\sigma\right|\\
        &\le \frac{1}{4}\|\nablas f_{\neq}\|_{L^2}^2+C(M^2+M+M\alpha)\|Kf_{\neq}\|_{L^2}^2+CM^2\|\Lk V_{\neq}\|_{L^2}^2
    \end{aligned}
\end{equation}
since $T_{\alpha}f_{\neq}$ have zero $\theta$-average and $|V_0|^2$ is axisymmetric, hence
\[
    I_{5,2}\le\int_{\mathbb{S}^2}(|V|+|V_0|)|V_{\neq}||T_{\alpha}f_{\neq}|d\sigma\le CM\|\Lk V_{\neq}\|_{L^2}^2+CM\|Kf_{\neq}\|_{L^2}^2
\]
similarly
\[
    I_{5,3}\le CM\|Kf_{\neq}\|_{L^2}^2
\]

\textbf{To conclude}, we have the energy estimate

\begin{equation}
    \begin{aligned}
        &\frac{1}{2}\|\nablas V_{\neq}\|_{L^2}^2+\|V_{\neq}\|_{L^2}^2+4\alpha^2\|\mathcal{L}_KV_{\neq}\|_{L^2}^2+\frac{1}{4}\|\nablas f_{\neq}\|_{L^2}^2+2\alpha^2\|Kf_{\neq}\|_{L^2}^2\\
        \le\;& C(M^2+M+M\alpha+\alpha)\left(\|\Lk V_{\neq}\|_{L^2}^2+\|Kf_{\neq}\|_{L^2}^2\right)
    \end{aligned}
\end{equation}
taking $\alpha\ge\alpha_-(M)\sim 1+O(M)$ sufficiently large the energy estimate tells us $V_{\neq}=f_{\neq}=0$, hence $V$ and $f$ are both axisymmetric, the rotation derivative term ($\Lk V$, $Kf$) in \eqref{eq:RSS_profile} vanish and the profile equation degenerate to the self-similar profile \eqref{eq:SS-profile}, by the result of Tian-Xin \cite{TX1998} or \v{S}ver\'{a}k \cite{S2011}, we know that $u=\frac{1}{r}(v+fn)$ must be Landau solution, now we finish the large $\alpha$ argument.

\appendix

\section{Appendix A: Geometric facts on $\mathbb{S}^2$}

The standard metric of $\mathbb{S}^2$ is 
\[
    g^{(\sigma)}=d\phi^2+\sin^2\phi d\theta^2
\]
with $\nabla^{(\sigma)}$ and $D$ is Levi-Cevita connection of $\mathbb{S}^2$ and $\mathbb{R}^3$, suppose that $V=V^{\theta}\partial_{\theta}+V^{\phi}\partial_{\phi}\in \mathfrak{X}(\mathbb{S}^2)$, $f\in C^{\infty}(\mathbb{S}^2)$ 
and $u=\frac{1}{r}(V+fn)$

\subsection{Reduce the equation of self-similar solution on $\mathbb{S}^2$}

The computation of convection term using Gauss and Weingarten formula of $\mathbb{S}^2\subset\mathbb{R}^3$
\[
    D_{X}Y=\frac{1}{r}\nablas_{X}Y-\frac{1}{r}\langle X,Y\rangle n,\quad\ D_{X}n=X\ \text{(restrict on sphere)}
\]
and $D_nX=D_nn=0$ since $X$, $n$ is independent of $r$, then
\[
    (u\cdot\nabla)u=D_{u}u=\frac{1}{r^3}\left(\nablas_VV-|V|^2n+\nablas_{V}fn-f^2n\right)
\]

For the viscous term, suppose that $e_\theta$, $e_\phi$ is local orthogonal frame on $\mathbb{S}^2$ 
\[
    \Delta=\partial_r^2+\frac{2}{r}\partial_r+\frac{1}{r^2}\Laps_S
\]
where $\Laps_S=D_{e_\phi}D_{e_{\phi}}+D_{e_{\theta}}D_{e_\theta}$ is spherical Laplacian, and using Gauss formula, we have
\begin{equation}
    \begin{aligned}
        \Laps_S V & =D_{e_\phi}D_{e_{\phi}}V+D_{e_{\theta}}D_{e_\theta}V=\sum\limits_{\alpha\in\{\phi,\theta\}}\nablas_{e_\alpha}\nablas_{e_{\alpha}}V-\langle V,e_{\alpha}\rangle e_{\alpha}-2\langle\nablas_{e_{\alpha}}V,e_{\alpha}\rangle n\\
        & = \Laps_{C}V-V-2\left(\divs V\right)n
    \end{aligned}
\end{equation}
using musical isomorphism ($V^\flat=\sin^2\phi V^{\theta}d\theta+V^{\phi}d\phi$ and $(V^{\flat})^{\sharp}=V$), we have $\left(\Laps_C V\right)^\flat=\Laps_CV^\flat$, by Weitzenb\"{o}ck formula $\Laps_C=\Lapsh+1$ (where $\Lapsh=dd^*+d^*d$ is Hodge Laplacian), we have
\[
    \Laps_S V=\left(\Lapsh V^\flat\right)^{\sharp}-2\left(\divs V\right)n
\]
for convenience we denote $\left(\Lapsh V^\flat\right)^{\sharp}=:\Lapsh V$.

Similarly $\Laps_S(fn)=(\Laps f)n+2\nablas f-2fn$, hence
\[
    \Delta u=\frac{1}{r^3}\left(\Lapsh V+2\nablas f+(\Laps f)n\right)
\]
and the computation of $\nabla p$ and $\operatorname{div}u=0$ is standard.

When doing energy estimate, we will use
\begin{equation}\label{eq:Weitzenbock}
    -\int_{\mathbb{S}^2}V\cdot\Lapsh V=\|\nablas V\|_{L^2}^2+\|V\|_{L^2}^2
\end{equation}
which is from Weitzenb\"{o}ck formula.

\subsection{Vorticity equation}

For $V\in\mathfrak{X}(\mathbb{S}^2)$, its vorticity on $\mathbb{S}^2$ is given by $dV^{\flat}=\omega\Omega_0$, where $\Omega_0$ is volume form of $\mathbb{S}^2$, then the vorticity of viscosity term is
\[
    *d(\Lapsh V)=*d(dd^*+d^*d)V^{\flat}=\Laps\omega
\]
and the pressure term is killed by $d(\nablas(p-2f))^\flat=0$.

When it comes to convection term, for the use in Prop \ref{prop:classification}, we need the following fact

\begin{proposition}\label{prop:perturb_vorticity_equation}
    \begin{equation}\label{eq:perturb_vorticity_equation}
        d\left(\nabla_XY+\nabla_YX\right)^\flat=\mathcal{L}_X(dY^\flat)+\mathcal{L}_Y(dX^\flat)
    \end{equation}
\end{proposition}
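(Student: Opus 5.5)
The plan is to reduce the identity to a pointwise formula relating the covariant derivative to the Lie derivative of the dual $1$-form, then apply $d$ using that $d$ commutes with Lie derivatives and $d^2=0$. Throughout, $\nabla=\nablas$ is the Levi-Civita connection of $g=g^{(\sigma)}$ on $\mathbb{S}^2$, and $\flat$ is the musical isomorphism. We read both sides of \eqref{eq:perturb_vorticity_equation} as $2$-forms, with $\mathcal{L}_X$ acting on the $2$-forms $dY^\flat$ and $dX^\flat$.

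\textbf{Step 1: the quadratic identity.} For a $1$-form $\alpha$ and vector fields $X,Z$, write $\mathcal{L}_X\alpha$ using the torsion-free relation $[X,Z]=\nabla_XZ-\nabla_ZX$:
\[
(\mathcal{L}_X\alpha)(Z)=X(\alpha(Z))-\alpha([X,Z])=(\nabla_X\alpha)(Z)+\alpha(\nabla_ZX).
\]
Take $\alpha=X^\flat$. Metric compatibility gives $\nabla_X X^\flat=(\nabla_XX)^\flat$ and $\alpha(\nabla_ZX)=\langle X,\nabla_ZX\rangle=\tfrac12 Z|X|^2$. Hence
\[
(\nabla_XX)^\flat=\mathcal{L}_XX^\flat-\tfrac12\, d|X|^2 .
\]

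\textbf{Step 2: polarization.} Both sides of Step 1 are quadratic in $X$, coming from maps that are $\mathbb{R}$-bilinear in the pair of slots:
\begin{itemize}
\item $(X,Y)\mapsto\nabla_XY$,
\item $(X,Y)\mapsto\mathcal{L}_XY^\flat$,
\item $(X,Y)\mapsto d\langle X,Y\rangle$.
\end{itemize}
Replace $X$ by $X+Y$ and subtract the identities for $X$ and for $Y$. This yields
\[
(\nabla_XY+\nabla_YX)^\flat=\mathcal{L}_XY^\flat+\mathcal{L}_YX^\flat-d\langle X,Y\rangle .
\]
The same formula can also be checked directly from the expression in Step 1 with $\alpha=Y^\flat$, using $\langle Y,\nabla_ZX\rangle+\langle X,\nabla_ZY\rangle=Z\langle X,Y\rangle$.

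\textbf{Step 3: apply $d$.} Cartan's formula $\mathcal{L}_X=d\iota_X+\iota_Xd$ shows that $d$ commutes with $\mathcal{L}_X$ on forms. Together with $d^2=0$, applying $d$ to Step 2 gives
\[
d(\nabla_XY+\nabla_YX)^\flat=\mathcal{L}_X(dY^\flat)+\mathcal{L}_Y(dX^\flat),
\]
which is \eqref{eq:perturb_vorticity_equation}.

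\textbf{Use in the proof of Prop~\ref{prop:classification}.} There one writes $dY^\flat=\omega\,\Omega_0$ and uses $\mathcal{L}_X(\omega\Omega_0)=(X\omega+\omega\operatorname{div}X)\Omega_0=\operatorname{div}(\omega X)\,\Omega_0$. With $dV^\flat=0$ for the Landau profile, this produces \eqref{eq:vorticity_linear}.

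\textbf{Main obstacle.} The only real step is Step 1, specifically getting the sign of the gradient term right; the remaining steps are formal. The analytic content is only metric compatibility and torsion-freeness, so the identity actually holds on any Riemannian manifold.
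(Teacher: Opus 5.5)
Your proof is correct and is essentially the paper's argument. The paper computes $(\mathcal{L}_XY^\flat)(Z)=(\nabla_XY)^\flat(Z)+g(Y,\nabla_ZX)$ directly for the mixed pair (the alternative you note at the end of Step 2) rather than polarizing the diagonal case; it then obtains $(\nabla_XY+\nabla_YX)^\flat=\mathcal{L}_XY^\flat+\mathcal{L}_YX^\flat-d\langle X,Y\rangle$ and applies $d$ using $d\mathcal{L}_X=\mathcal{L}_Xd$ and $d^2=0$, exactly as in your Step 3.
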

\begin{proof}
Since $(\mathcal{L}_X\alpha)(Y)=X(\alpha(Y))-\alpha([X,Y])$
    \begin{equation}\nonumber
    \begin{aligned}
        \left(\mathcal{L}_XY^\flat\right)(Z)&=X(g(Y,Z))-g(Y,[X,Z])=g(\nabla_XY,Z)+g(Y,\nabla_XZ)-g(Y,[X,Z])\\
        &=(\nabla_XY)^\flat(Z)+g(Y,\nabla_ZX)
    \end{aligned}
    \end{equation}

Similarly
\[
    \left(\mathcal{L}_YX^\flat\right)(Z)=(\nabla_YX)^\flat(Z)+g(X,\nabla_ZY)
\]
then
\[
    \left(\mathcal{L}_XY^\flat+\mathcal{L}_YX^\flat\right)(Z)=\left(\nabla_XY+\nabla_YX\right)^{\flat}(Z)+Z(g(X,Y))=\left(\nabla_XY+\nabla_YX\right)^{\flat}(Z)+d\langle X,Y\rangle(Z)
\]
hence
\begin{equation}
    d\left(\nabla_XY+\nabla_YX\right)^\flat=d\left(\mathcal{L}_XY^\flat+\mathcal{L}_YX^\flat\right)=\mathcal{L}_X(dY^\flat)+\mathcal{L}_Y(dX^\flat)
\end{equation}
\end{proof}

By Prop \ref{prop:perturb_vorticity_equation}, we know $d(\nablas_VV)^\flat=\mathcal{L}_VdV^\flat=(\mathcal{L}_V\omega)\Omega_0+\omega\divs V\Omega_0=\divs(V\omega)\Omega_0$, hence the vorticity equation of tangential equation in \eqref{eq:SS-profile} is \eqref{eq:vorticity_SS}, and the vorticity equation of the tangential part of linearized equation \eqref{eq:perturb_equation} is \eqref{eq:vorticity_linear} since
\[
    d(\nablas_Vv+\nablas_vV)^\flat=\mathcal{L}_V(dv^\flat)+\mathcal{L}_v(dV^\flat)=\divs(V\omega)
\]
the last equality uses $dV^\flat=0$, where $V$ is a Landau solution.

\section{Appendix B: Maximal principle type lemma}

To prove vorticity equation \eqref{eq:vorticity_SS} forces $\omega\equiv 0$, \v{S}ver\'ak \cite{S2011} prove the following maximal principle type lemma (since the zero order term in $L$ has no sign):

\begin{lemma}\label{lem:maximal_lemma}
    Suppose that $V\in \mathfrak{X}\left(\mathbb{S}^2\right)$, $f\in C^{\infty}(\mathbb{S}^2)$ satisfies
    \[
        -\Laps f+\divs(fV)=0,\qquad\int_{\mathbb{S}^2}fd\sigma=0
    \]
    then $f\equiv 0$.
\end{lemma}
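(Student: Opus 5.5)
The plan is to exploit the divergence structure of the operator, which makes it the formal adjoint of the drift-diffusion operator $\Laps h+\langle V,\nablas h\rangle$, and to reduce the claim to uniqueness of the invariant measure of that drift diffusion. We set $L^*f=-\Laps f+\divs(fV)$ and $Lh=-\Laps h-\langle V,\nablas h\rangle_{(\sigma)}$. Integrating by parts on the closed manifold $\mathbb{S}^2$ gives $\int (L^*f)h=\int f(Lh)$. This yields two facts.

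\textbf{Step 1: the kernel of $L$.} If $Lh=0$, the strong maximum principle applies at a maximum point of $h$: there is no zero order term, and $\mathbb{S}^2$ is compact and connected. Hence $h$ is constant, so $\operatorname{Ker}L=\operatorname{span}\{1\}$.

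\textbf{Step 2: the kernel of $L^*$ via Fredholm theory.} The operators $L$ and $L^*$ are elliptic of order two on a compact manifold, so they are Fredholm of index zero; the index equals that of $-\Laps$, since the lower order terms are compact perturbations. Therefore $\dim\operatorname{Ker}L^*=\dim\operatorname{Ker}L=1$. We fix a generator $f_*$ of $\operatorname{Ker}L^*$.

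\textbf{Step 3: positivity of $f_*$.} This is the main obstacle. The cleanest route is the Krein--Rutman theorem applied to the resolvent $(L^*+\mu)^{-1}$ for large $\mu$, which is positivity preserving and compact, so the principal eigenfunction can be chosen positive. Since $\int L^*f\,d\sigma=0$ for every $f$, the eigenvalue $0$ has the positive dual eigenvector $1$ and is therefore the principal one. Hence $f_*$ can be taken strictly positive.

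An alternative avoids Krein--Rutman. Write $f_*=f_+-f_-$ and consider the parabolic flow $\partial_t f=-L^*f$, which preserves the total integral and is $L^1$-contractive. The parts $f_\pm$ evolve with nonincreasing mass; strict decrease would occur if the two supports interact, and the strong parabolic maximum principle forces positivity of the evolution of a nonnegative nonzero datum. This rules out a sign-changing stationary solution.

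\textbf{Step 4: conclusion.} Since $f_*>0$, we have $\int f_*\,d\sigma>0$. Any $f$ satisfying the hypotheses lies in $\operatorname{Ker}L^*$, so $f=cf_*$. The condition $\int f\,d\sigma=0$ then forces $c=0$, i.e. $f\equiv 0$.

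Smoothness of $V$ guarantees all the elliptic regularity used above. The only genuinely delicate point is the positivity and simplicity in Step 3, which is where the absence of a sign on the zero order term $f\,\divs V$ is circumvented.
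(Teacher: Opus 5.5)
Your argument is correct, and Steps 1, 2 and 4 are exactly the paper's. The paper swaps the names relative to yours: it calls the divergence-form operator $L$ and the drift operator $L^*=-\Laps-\nablas_V$.

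The genuine difference is Step 3, where you show that the kernel generator has a sign. The paper does this using only the tools already in play. If the generator $f_0$ changed sign, one picks a smooth $h>0$ with $\int_{\mathbb{S}^2}f_0h\,d\sigma=0$. By the Fredholm alternative, $h$ then lies in the range of the drift operator, so $-\Laps f_1-\nablas_Vf_1=h$ for some $f_1$. At a minimum point of $f_1$ the left side is $\le 0<h$, a contradiction.

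You instead apply Krein--Rutman to the resolvent $(L^*+\mu)^{-1}$. This is positivity preserving once $\mu>\sup|\divs V|$, because the zero-order coefficient becomes $\mu+\divs V>0$. You then pin the principal eigenvalue at $0$ via $\lambda_1\int\phi_1\,d\sigma=\int L^*\phi_1\,d\sigma=0$. This is valid and gives more: strict positivity of $f_*$ and simplicity of the eigenvalue without the index count, so your Step 2 becomes redundant. The cost is importing a comparatively heavy theorem, whereas the paper needs only the maximum principle and the Fredholm alternative. Step 4 only needs ``$f_0$ does not change sign'', which is all the paper proves.

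Your parabolic alternative is the standard strict $L^1$-contraction argument and works in principle, but as written it is only a sketch. Write $S(t)$ for the solution operator of $\partial_tf=-L^*f$. Each evolved part $S(t)f_\pm$ in fact conserves its mass, rather than having nonincreasing mass. The contradiction is $\|f_*\|_{L^1}=\|S(t)f_+-S(t)f_-\|_{L^1}<\|f_+\|_{L^1}+\|f_-\|_{L^1}=\|f_*\|_{L^1}$, which uses that both evolutions are strictly positive everywhere for $t>0$.
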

\begin{proof}
    Denote
    \[
        L=-\Laps+\divs(\cdot\, V),\quad\  L^*=-\Laps-\nablas_V
    \]
    
    By strong maximal principle on compact manifold we know that
    \[
        \operatorname{Ker} L^*=\{\text{const}\}
    \]
    since $\divs(\cdot\,V)$ is a compact perturbation of $-\Laps$, then from Fredholm index theory
    \[
        0=\operatorname{ind}\left(-\Laps\right)=\operatorname{ind}L=\operatorname{dim}\operatorname{Ker}L-\operatorname{dim}\operatorname{Ker}L^*
    \]
    therefore $\operatorname{Ker}L$ is one dimensional.

    Suppose that $\operatorname{Ker} L=\{f_0\}$, where $f_0\not\equiv 0$, we claim that $f_0$ does not change sign on $\mathbb{S}^2$, if $f_0$ change sign, then $\exists\, h(>0)\in C^{\infty}(\mathbb{S}^2)$, such that
    \[
        \int_{\mathbb{S}^2}f_0hd\sigma=0
    \]
    since $\{f_0\}=\operatorname{Ker} L$, $h\perp f_0$ $\Longrightarrow$ $h\in \operatorname{Im}L^*$, i.e. $\exists\, f_1\in C^{\infty}(\mathbb{S}^2)$ such that
    \[
        L^*f_1=h
    \]
    since $h$ positive, by strong maximal principle $f_1\equiv\text{const}$, which contradicts with $h>0$, hence $f_0$ dose not change sign on $\mathbb{S}^2$, $f=cf_0$ but
    \[
        \int_{\mathbb{S}^2}fd\sigma=0
    \]
    then $f\equiv 0$.
\end{proof}

\noindent\textbf{Acknowledgments:} L. Zhang is partially supported by NSFC under grant 12484540, 12494541 and National Key RD program of China Grant No.2021YFA1000800.

\bibliographystyle{abbrv}

\vspace{2em}
\noindent Institute of Mathematics, AMSS, UCAS, Beijing 100190, China
123\\
\texttt{{\em Email address:}shiyuxuan@amss.ac.cn}\\

\noindent Hua Loo-Keng Key Laboratory of Mathematics, Institute of Mathematics, AMSS, and School of
Mathematical Sciences, UCAS, Beijing 100190, China 
123\\
\texttt{{\em Email address:}lqzhang@math.ac.cn}

\end{document}